\documentclass[11pt]{article}

\usepackage[utf8]{inputenc}
\usepackage[T1]{fontenc}
\usepackage[english]{babel}

\usepackage{amsmath,amssymb,amsfonts,latexsym}
\usepackage{mathtools}
\usepackage{bm}

\usepackage{amsthm}
\numberwithin{equation}{section}

\usepackage{graphicx}
\usepackage{subcaption}
\usepackage{float}


\usepackage{enumitem}
\usepackage{microtype}

\usepackage{xcolor}
\usepackage[colorlinks=true,linkcolor=blue,citecolor=blue,urlcolor=blue]{hyperref}

\usepackage{tikz}

\allowdisplaybreaks


\theoremstyle{plain}
\newtheorem{theorem}{Theorem}[section]
\newtheorem{lemma}[theorem]{Lemma}
\newtheorem{proposition}[theorem]{Proposition}
\newtheorem{corollary}[theorem]{Corollary}

\theoremstyle{definition}
\newtheorem{definition}[theorem]{Definition}

\theoremstyle{remark}
\newtheorem{remark}[theorem]{Remark}

\title{\Large
${}^{W}D_{t}^{\alpha,\beta}$: A Volterra Fractional Time Operator\\
with Sharp Bernstein Threshold and Regularized Memory
}

\author{
Mohamed Wakrim\thanks{Corresponding author.}\\
Ibn Zohr University, FSA, Morocco\\
\texttt{m.wakrim@uiz.ac.ma}
}

\date{\today}

\begin{document}
\maketitle

\begin{center}
\small
\textbf{Preprint.} This manuscript is a preprint and may be under journal review.
\end{center}

\begin{abstract}
We introduce a new two-parameter fractional time operator with Volterra structure,
denoted by ${}^{W}D_{t}^{\alpha,\beta}$, defined through the Laplace symbol
\[
\Phi_{\alpha,\beta}(s)
=
\frac{s^\alpha}{\bigl(1+(1-\alpha)s^{\alpha-1}\bigr)^\beta},
\qquad 0<\alpha<1,
\ \beta\ge0.
\]
The operator preserves the Caputo-type high-frequency behavior while allowing a
controlled modification of the low-frequency regime via $\beta$. We develop an
explicit symbolic/Volterra theory: Prabhakar-type kernels, a left-inverse Volterra
integral, and a fractional fundamental theorem of calculus.
A central contribution is a sharp clarification of the Bernstein structure of the
symbol. We show that the natural factorization
$\Phi_{\alpha,\beta}(s)=s^\alpha h_\alpha(s)^\beta$ does not fit the classical
Bernstein product mechanism for any $\beta>0$. Nevertheless, by a direct
complete-monotonicity argument on $\Phi'_{\alpha,\beta}$, we prove the exact
Bernstein threshold
\[
\Phi_{\alpha,\beta}\in\mathcal{BF}
\quad\Longleftrightarrow\quad
0\le\beta\le1.
\]
where $\mathcal{BF}$ denotes the class of Bernstein functions

\noindent For $\beta>1$, the Bernstein property fails by a low-frequency asymptotic
convexity obstruction. This shows that the Bernstein nature of the natural range
$0\le\beta\le1$ is genuine but is not produced by the standard product mechanism.
We then establish well-posedness of abstract W-fractional Cauchy problems with
sectorial generators by resolvent estimates and Laplace inversion, yielding a
W-resolvent family with temporal regularity and smoothing properties. As an
illustration, we apply the theory to a W-fractional diffusion model and discuss
the effect of $\beta$ on the relaxation of spectral modes.
\end{abstract}


\section{Introduction}

Fractional evolution equations play a central role in the modeling of
diffusion processes with memory, viscoelastic phenomena, and anomalous transport; see, for instance, \cite{Podlubny1999,KilbasSrivastavaTrujillo2006,Mainardi2010,Pruss2016}. From an operator-theoretic viewpoint, the classical Caputo and
Riemann--Liouville derivatives are distinguished by their intimate
connection with Volterra convolution operators, Bernstein functions, and subordination theory, which together provide a powerful framework for well-posedness, regularity, and probabilistic interpretation \cite{SchillingSongVondracek2012,Bazhlekova2000}.

\medskip
\noindent In recent years, several fractional derivatives with modified or
regularized memory kernels have been proposed, most notably the
Atangana--Baleanu and Caputo--Fabrizio models \cite{CaputoFabrizio2015,AtanganaBaleanu2016}. These operators aim at removing certain singular features of classical fractional kernels, but often at the cost of losing the Volterra or
Bernstein structure that underpins the abstract theory of fractional
evolution equations. As a consequence, their interpretation within the standard subordination-based framework remains delicate and, in some cases, problematic \cite{Giusti2018AB,Luchko2019AB}.

\noindent The purpose of the present work is to introduce and analyze a new fractional time operator,
\[
{}^{W}D_{t}^{\alpha,\beta},
\]
designed to preserve the essential Volterra structure of classical
fractional derivatives while allowing a controlled regularization of
memory effects. The operator is defined through the Laplace symbol
\[
\Phi_{\alpha,\beta}(s)
=
\frac{s^\alpha}{\bigl(1+(1-\alpha)s^{\alpha-1}\bigr)^\beta},
\]
where the parameter $\beta$ modifies the low-frequency behavior without altering the Caputo-type high-frequency scaling. This design leads to a clear separation between short-time fractional
effects and long-time relaxation mechanisms.

\noindent A central structural result of the paper is that the natural auxiliary factor in the factorization $\Phi_{\alpha,\beta}(s)=s^\alpha h_\alpha(s)^\beta$ fails to be completely monotone for all $0<\alpha<1$, so that the canonical Bernstein product mechanism does not apply when $\beta>0$. Moreover, we show that the symbol $\Phi_{\alpha,\beta}$ is not a Bernstein function for $\beta>1$, placing the $W$-operator outside the classical subordination framework. Despite this, the operator retains a fully explicit Volterra structure and admits a well-posed abstract Cauchy problem based on resolvent methods.

\medskip
\noindent
After an earlier version of this manuscript, the author became aware of the
public staged proof APP-0019 of the Pudim AI Project, which pointed out the
complementary Bernstein range of the $W$-symbol for $0\le \beta\le 1$.
This observation led to the inclusion of the self-contained proof given in
Section~\ref{sec:positioning}. Combined with the non-Bernstein obstruction
for $\beta>1$ established here, this yields the sharp Bernstein threshold
\[
\Phi_{\alpha,\beta}^{W}\in\mathcal{BF}
\quad\Longleftrightarrow\quad
0\le \beta\le 1.
\]
We emphasize that this statement concerns the Bernstein property of the
symbol only. It does not assert the complete Bernstein property, nor does it
automatically restore the full complete-Bernstein/subordination framework.

\medskip
\noindent\textbf{On the role of the parameter $\beta$.}
Throughout the paper, the parameter $\beta$ plays a dual role.
On the one hand, the Volterra representation of the operator
${}^{W}D_{t}^{\alpha,\beta}$ and the reciprocal kernel construction remain valid for all $\beta\ge0$. On the other hand, positivity properties of the memory kernel, as well as the well-posedness theory for abstract evolution equations, naturally restrict the analysis to the range $0\le\beta\le1$. This interval corresponds to the regime in which Prabhakar-type kernels satisfy known positivity and regularity criteria and where the resolvent-based approach yields a robust evolution theory. The case $\beta>1$, while analytically meaningful at the symbolic level, raises additional questions that are briefly discussed but left for future investigation.

The paper is organized as follows.
In Section~\ref{sec:symbol} we introduce the symbol $\Phi_{\alpha,\beta}$ and analyze its asymptotic and sectorial properties. Section~\ref{sec:positioning} proves the sharp Bernstein threshold and clarifies the structural position of the $W$-operator with respect to Bernstein theory.
In Section~\ref{sec:volterra-inverse} we construct the Volterra
representation, the inverse operator, and establish a fractional
fundamental theorem of calculus. Section~\ref{sec:W-evolution} is devoted to abstract $W$-fractional evolution equations and their well-posedness. Finally, in Section~\ref{sec:model-application} we illustrate the theory on a $W$-fractional diffusion model and discuss the spectral and qualitative influence of the parameter $\beta$.

\section{The symbol $\Phi_{\alpha,\beta}$}
\label{sec:symbol}

\subsection{Motivation and design principles}

Fractional time--operators are naturally encoded through their Laplace symbols. In the classical Caputo case, the symbol $s^\alpha$
($0<\alpha<1$) is a complete Bernstein function, a fact that underlies the entire Volterra--Bernstein framework: complete monotonicity of the kernel, subordination, and the standard abstract Cauchy problem theory.

\noindent However, several non-singular models proposed in the literatureâ€”most
notably of Atangana--Baleanu typeâ€”are associated with symbols that saturate or flatten at high frequencies. From an operator--theoretic perspective, this saturation destroys the Caputo scaling, blurs the separation between short- and long-time regimes, and complicates resolvent representations by altering the sectorial geometry induced by $s^\alpha$.

\noindent The guiding principle here is therefore twofold:
\begin{itemize}
\item to \emph{preserve} the Caputo behaviour $\Phi(s)\sim s^\alpha$ at high frequencies (short-time regime);
\item to introduce a \emph{controlled low-frequency modification} allowing,
when desired, a transition toward a first-order regime.
\end{itemize}

This leads to the two-parameter family of symbols
\[
\Phi_{\alpha,\beta}(s)
=
\frac{s^\alpha}{\bigl(1+(1-\alpha)s^{\alpha-1}\bigr)^\beta},
\]
where $\beta\ge0$ tunes the low-frequency behaviour without affecting the high-frequency Caputo scaling. In the distinguished case $\beta=1$, the symbol interpolates between a fractional regime at infinity and a linear regime at the origin.

\subsection{Definition and basic properties}

Let $0<\alpha<1$ and $\beta\ge0$. We define
\begin{equation}\label{eq:Phi}
\Phi_{\alpha,\beta}(s)
=
\frac{s^\alpha}{\bigl(1+(1-\alpha)s^{\alpha-1}\bigr)^\beta},
\qquad s>0.
\end{equation}

\begin{lemma}[Regularity and asymptotics]\label{lem:basic}
The function $\Phi_{\alpha,\beta}$ is positive and $C^\infty$ on
$(0,\infty)$. Moreover,
\[
\Phi_{\alpha,\beta}(s)\sim s^\alpha \quad (s\to\infty),
\qquad
\Phi_{\alpha,1}(s)\sim \frac{s}{1-\alpha} \quad (s\to0^+),
\]
and for every fixed $s>0$,
\[
\lim_{\alpha\to1}\Phi_{\alpha,\beta}(s)=s.
\]
\end{lemma}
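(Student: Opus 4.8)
The plan is to treat the three assertions in turn, each reducing to an elementary limit computation once the symbol is written in exponential form. Throughout, fix $0<\alpha<1$ and $\beta\ge 0$, and set $g_\alpha(s):=1+(1-\alpha)s^{\alpha-1}$ for $s>0$, so that $\Phi_{\alpha,\beta}(s)=s^\alpha/g_\alpha(s)^\beta$.

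First I would establish positivity and smoothness. For $s>0$ one has $s^{\alpha-1}>0$ and $1-\alpha>0$, hence $g_\alpha(s)>1>0$; since $s^\alpha>0$ as well, the quotient defining $\Phi_{\alpha,\beta}$ is strictly positive. For smoothness, write $s^\alpha=e^{\alpha\ln s}$ and $s^{\alpha-1}=e^{(\alpha-1)\ln s}$, both $C^\infty$ on $(0,\infty)$; then $g_\alpha$ is $C^\infty$ and bounded below by $1$, so $g_\alpha(s)^{\beta}=e^{\beta\ln g_\alpha(s)}$ is $C^\infty$ as a composition of smooth maps. As the denominator never vanishes, $\Phi_{\alpha,\beta}$ is $C^\infty$ on $(0,\infty)$.

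Next I would read off the two asymptotics. At high frequency, $\alpha-1<0$ forces $s^{\alpha-1}\to 0$ as $s\to\infty$, so $g_\alpha(s)\to 1$ and $g_\alpha(s)^\beta\to 1$; dividing, $\Phi_{\alpha,\beta}(s)/s^\alpha=g_\alpha(s)^{-\beta}\to 1$, which is the claim $\Phi_{\alpha,\beta}(s)\sim s^\alpha$. At low frequency with $\beta=1$, the same sign $\alpha-1<0$ now makes $s^{\alpha-1}\to\infty$ as $s\to 0^+$, so the constant term is negligible and $g_\alpha(s)\sim(1-\alpha)s^{\alpha-1}$. Substituting,
\[
\Phi_{\alpha,1}(s)=\frac{s^\alpha}{g_\alpha(s)}\sim\frac{s^\alpha}{(1-\alpha)s^{\alpha-1}}=\frac{s}{1-\alpha},
\]
which is the asserted equivalence.

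Finally, for the limit $\alpha\to 1$ at fixed $s>0$, I would argue by continuity of each factor in $\alpha$. One has $s^\alpha\to s$ and $s^{\alpha-1}\to s^0=1$, while $1-\alpha\to 0$, so the product $(1-\alpha)s^{\alpha-1}\to 0$ and hence $g_\alpha(s)\to 1$ and $g_\alpha(s)^\beta\to 1$; therefore $\Phi_{\alpha,\beta}(s)\to s$. The only point requiring a moment of care is isolating the dominant term in the $s\to 0^+$ analysis, where the exponents $\alpha$ and $\alpha-1$ must be tracked with their correct signs; all remaining steps are direct evaluations of limits of continuous functions, so no substantive obstacle arises.
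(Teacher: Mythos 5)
Your proof is correct and follows essentially the same route as the paper's: positivity and smoothness from the nonvanishing, smooth denominator; the two asymptotics from the sign of $\alpha-1$; and the $\alpha\to1$ limit by continuity of each factor. The only cosmetic difference is that you justify smoothness via the exponential form and keep the equivalence $g_\alpha(s)\sim(1-\alpha)s^{\alpha-1}$ near the origin, where the paper instead rewrites $\Phi_{\alpha,1}(s)=s/(s^{1-\alpha}+(1-\alpha))$; both are equally valid.
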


\begin{proof}
For $s>0$, all factors in \eqref{eq:Phi} are positive and smooth, and the denominator never vanishes, hence $\Phi_{\alpha,\beta}\in C^\infty(0,\infty)$.

As $s\to\infty$, since $\alpha-1<0$ one has $s^{\alpha-1}\to0$, so the denominator tends to $1$ and $\Phi_{\alpha,\beta}(s)\sim s^\alpha$.

\noindent For $\beta=1$ and $s\to0^+$,
\[
\Phi_{\alpha,1}(s)
=\frac{s}{s^{1-\alpha}+(1-\alpha)}
\sim \frac{s}{1-\alpha},
\]
because $1-\alpha>0$.

\noindent Finally, for fixed $s>0$, letting $\alpha\to1$ yields
$s^\alpha\to s$ and $(1-\alpha)s^{\alpha-1}\to0$, hence
$\Phi_{\alpha,\beta}(s)\to s$.
\end{proof}

\subsection{Sectorial behaviour and two-regime estimates}

The construction of resolvent families and mild solutions relies on
precise control of the symbol along sectorial contours. The symbol
$\Phi_{\alpha,\beta}$ exhibits a characteristic two-regime behaviour.

\begin{proposition}[Two-regime sectorial bounds]\label{prop:Phi_2regime}
Fix $0<\alpha<1$ and $\beta\ge0$. Then for every $\varepsilon\in(0,\pi)$ there
exist constants $c_\varepsilon,C_\varepsilon>0$ such that for all
$s\in\mathbb{C}$ with $|\arg s|\le \pi-\varepsilon$:

\begin{enumerate}[label=\textnormal{(\roman*)}]
\item if $|s|\ge 1$, then
\[
c_\varepsilon\,|s|^\alpha \le |\Phi_{\alpha,\beta}(s)| \le C_\varepsilon\,|s|^\alpha;
\]
\item if $0<|s|\le 1$, then
\[
c_\varepsilon\,|s|^{\alpha+\beta(1-\alpha)}
\le |\Phi_{\alpha,\beta}(s)|
\le C_\varepsilon\,|s|^{\alpha+\beta(1-\alpha)}.
\]
\end{enumerate}
Equivalently, uniformly on the sector $|\arg s|\le \pi-\varepsilon$,
\[
|\Phi_{\alpha,\beta}(s)| \simeq |s|^\alpha \quad (|s|\ge1),
\qquad
|\Phi_{\alpha,\beta}(s)| \simeq |s|^{\alpha+\beta(1-\alpha)} \quad (0<|s|\le1),
\]
where the implicit constants may depend on $\varepsilon$ (and on $\alpha,\beta$ if these are not fixed), but are uniform with respect to $s$ in the prescribed sector.
\end{proposition}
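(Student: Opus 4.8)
The plan is to reduce the whole statement to a single uniform estimate for $|1+w|$ on a sector avoiding the negative real axis, and then to split into the two magnitude regimes. First I would note that along the principal branch the numerator is exactly $|s^\alpha|=|s|^\alpha$, so all the work concerns the denominator. Writing $w=w(s):=(1-\alpha)s^{\alpha-1}$, I would observe that $|w|=(1-\alpha)|s|^{\alpha-1}$ and $\arg w=(\alpha-1)\arg s$, so that on $|\arg s|\le\pi-\varepsilon$ one has
\[
|\arg w|=(1-\alpha)|\arg s|\le(1-\alpha)(\pi-\varepsilon)=\pi-\delta,
\qquad \delta:=\alpha\pi+(1-\alpha)\varepsilon>0 .
\]
The strict positivity of $\delta$, which crucially uses $\alpha>0$, is exactly what keeps $w$ bounded away from the ray on which $1+w$ could vanish.

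Next I would establish the key sublemma: for every $\delta\in(0,\pi)$ there is $c_\delta\in(0,1]$ with
\[
c_\delta\,(1+|w|)\le|1+w|\le 1+|w|\qquad\text{whenever } |\arg w|\le\pi-\delta .
\]
The upper bound is just the triangle inequality. For the lower bound I would write $w=re^{i\theta}$, use $|1+w|^2=1+2r\cos\theta+r^2$ with $\cos\theta\ge-\cos\delta$, and reduce to proving $1-2r\cos\delta+r^2\ge c_\delta^2(1+r)^2$ for all $r\ge0$. Since $1-2r\cos\delta+r^2=(r-\cos\delta)^2+\sin^2\delta$ is strictly positive and the ratio to $(1+r)^2$ is continuous on $[0,\infty)$ with limit $1$ at both endpoints, its infimum is a positive constant depending only on $\delta$. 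Applying this with $w=w(s)$ and raising to the power $\beta\ge0$ (monotone since $\beta\ge0$) gives
\[
|\Phi_{\alpha,\beta}(s)|=\frac{|s|^\alpha}{|1+w(s)|^\beta}
\simeq\frac{|s|^\alpha}{\bigl(1+(1-\alpha)|s|^{\alpha-1}\bigr)^\beta},
\]
with implicit constants depending only on $\varepsilon,\alpha,\beta$.

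Finally I would dispatch the two regimes by simplifying the bracket. For $|s|\ge1$, since $\alpha-1<0$ we have $|s|^{\alpha-1}\le1$, hence $1\le1+(1-\alpha)|s|^{\alpha-1}\le2-\alpha$; raising to $\beta$ confines this factor to $[1,(2-\alpha)^\beta]$, which yields (i). For $0<|s|\le1$ we have $|s|^{\alpha-1}\ge1$, so
\[
(1-\alpha)|s|^{\alpha-1}\le 1+(1-\alpha)|s|^{\alpha-1}\le(2-\alpha)|s|^{\alpha-1},
\]
giving $1+(1-\alpha)|s|^{\alpha-1}\simeq|s|^{\alpha-1}$; raising to $\beta$ and dividing $|s|^\alpha$ by it produces $|s|^{\alpha+\beta(1-\alpha)}$, which is (ii).

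I expect the main obstacle to be the uniform lower bound in the sublemma; everything else is bookkeeping. The delicate point is that $c_\delta$ must stay positive as $w$ approaches the negative real axis, and it degenerates precisely as $\delta\to0$, so the entire argument hinges on verifying that the relevant angle for $w$ is $\pi-\delta$ with $\delta=\alpha\pi+(1-\alpha)\varepsilon$ bounded away from $0$ (thanks to $\alpha>0$ and $\varepsilon>0$). I would also check the $\beta=0$ edge case, where $\Phi_{\alpha,\beta}=s^\alpha$ and both claims are immediate, and confirm that the monotonicity of $x\mapsto x^\beta$ is used correctly for $\beta\ge0$.
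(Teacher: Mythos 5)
Your proof is correct and follows the same overall strategy as the paper: isolate the denominator $1+(1-\alpha)s^{\alpha-1}$, use the sector condition to control its modulus, and split into the regimes $|s|\ge1$ and $|s|\le1$. The one genuine difference is that you route everything through a single sublemma, $c_\delta(1+|w|)\le|1+w|\le 1+|w|$ on $|\arg w|\le\pi-\delta$ with $\delta=\alpha\pi+(1-\alpha)\varepsilon$, and you actually prove it (via $(r-\cos\delta)^2+\sin^2\delta>0$ and a compactness/limit argument for the ratio). The paper instead treats the two regimes with separate ad hoc estimates: in the low-frequency regime it merely asserts the lower bound $|1+(1-\alpha)s^{\alpha-1}|\ge c_\varepsilon|s^{\alpha-1}|$ without proof, and in the high-frequency regime it claims the lower bound $|1+(1-\alpha)s^{\alpha-1}|\ge 1$, which is not justified as stated (for small $\alpha$ and $\arg s$ near $\pi-\varepsilon$ the argument of $s^{\alpha-1}$ approaches $-(1-\alpha)\pi$ and this modulus can drop below $1$; the reverse triangle inequality only gives the bound $\ge\alpha$, which still suffices). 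Your unified sublemma both supplies the missing justification in the low-frequency case and repairs the high-frequency lower bound, so your version is in fact the more complete argument; the paper's is shorter but leans on two unproved (and in one case incorrectly stated) pointwise bounds.
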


\begin{proof}
Let $s=re^{i\vartheta}$ with $r>0$ and $|\vartheta|\le\pi-\varepsilon$. We treat separately the regimes $r\ge1$ and $0<r\le1$.

\medskip\noindent
\textbf{Case 1: $r\ge1$.}
Since $0<\alpha<1$, we have $\alpha-1<0$ and therefore
\[
|s^{\alpha-1}| = r^{\alpha-1} \le 1.
\]
It follows that
\[
1 \le \bigl|1+(1-\alpha)s^{\alpha-1}\bigr|
\le 1+(1-\alpha)|s^{\alpha-1}|
\le 2-\alpha.
\]
Hence there exist constants $0<c_1\le C_1<\infty$, depending only on
$\alpha$, such that
\[
c_1 \le \bigl|1+(1-\alpha)s^{\alpha-1}\bigr| \le C_1
\qquad (r\ge1).
\]
Using the definition \eqref{eq:Phi}, we obtain
\[
|\Phi_{\alpha,\beta}(s)|
= \frac{|s|^\alpha}{\bigl|1+(1-\alpha)s^{\alpha-1}\bigr|^\beta}
\simeq r^\alpha,
\qquad r\ge1,
\]
with constants independent of $s$ in the sector $|\arg s|\le\pi-\varepsilon$.

\medskip\noindent
\textbf{Case 2: $0<r\le1$.}
In this regime,
\[
|s^{\alpha-1}| = r^{-(1-\alpha)} \ge 1.
\]
Moreover, since $|\arg s|\le\pi-\varepsilon$, the complex number
$s^{\alpha-1}$ remains uniformly away from the negative real axis, so
there exists $c_\varepsilon>0$ such that
\[
\bigl|1+(1-\alpha)s^{\alpha-1}\bigr|
\ge c_\varepsilon\,|s^{\alpha-1}|
= c_\varepsilon\,r^{-(1-\alpha)}.
\]
On the other hand,
\[
\bigl|1+(1-\alpha)s^{\alpha-1}\bigr|
\le 1+(1-\alpha)|s^{\alpha-1}|
\le C\,r^{-(1-\alpha)},
\]
for some constant $C>0$. Hence
\[
\bigl|1+(1-\alpha)s^{\alpha-1}\bigr|
\simeq r^{-(1-\alpha)},
\qquad 0<r\le1,
\]
uniformly for $|\arg s|\le\pi-\varepsilon$.

\noindent Using again \eqref{eq:Phi}, we deduce
\[
|\Phi_{\alpha,\beta}(s)|
= \frac{r^\alpha}{\bigl|1+(1-\alpha)s^{\alpha-1}\bigr|^\beta}
\simeq r^{\alpha+\beta(1-\alpha)},
\qquad 0<r\le1,
\]
with constants depending at most on $\varepsilon$, $\alpha$, and $\beta$, but uniform with respect to $s$ in the prescribed sector.
\end{proof}

\medskip
\noindent The estimates in Proposition~\ref{prop:Phi_2regime} make explicit the frequency structure encoded by $\Phi_{\alpha,\beta}$: Caputo-type behaviour at high frequencies and a tunable low-frequency regime. This structural feature will be central both in resolvent representations and in the sharp Bernstein threshold discussed next.

\section{Sharp Bernstein threshold and structural positioning}
\label{sec:positioning}

This section clarifies the position of the $W$-operator
${}^{W}D_{t}^{\alpha,\beta}$ with respect to the classical
Bernstein--subordination framework. We distinguish two different issues.
First, the natural factorization of the symbol does \emph{not} activate the
standard Bernstein product mechanism when $\beta>0$. Second, the symbol itself
has a sharp Bernstein threshold: it is a Bernstein function exactly for
$0\le\beta\le1$, and it is not Bernstein for $\beta>1$.

Throughout this section, we denote by $\mathcal{CM}$ the class of completely
monotone functions on $(0,\infty)$, namely
\[
\mathcal{CM}
:=
\{
g\in C^\infty(0,\infty),:,
(-1)^n g^{(n)}(s)\ge0,\quad n\ge0,\ s>0
\}.
\]
We denote by $\mathcal{BF}$ the class of Bernstein functions on $(0,\infty)$,
that is,
\[
\mathcal{BF}
:=
\{
f\in C^\infty(0,\infty),:,
f(s)\ge0\ \text{and}\ f'\in\mathcal{CM}
\}.
\]
Equivalently,
\[
f\in\mathcal{BF}
\quad\Longleftrightarrow\quad
f(s)\ge0
\ \text{and}
(-1)^n f^{(n+1)}(s)\ge0,
\quad n\ge0,\ s>0.
\]

\subsection{Failure of the canonical Bernstein product mechanism}

A common way to identify Bernstein symbols is to use closure properties
involving complete Bernstein functions and completely monotone or
Stieltjes-type factors; see \cite{SchillingSongVondracek2012}. In our setting,
a natural attempt is to factorize
\[
\Phi_{\alpha,\beta}(s)
= s^\alpha h_\alpha(s)^\beta,
\qquad
h_\alpha(s)=\frac{1}{1+(1-\alpha)s^{\alpha-1}},
\]
where $s^\alpha$ is a complete Bernstein function for $0<\alpha<1$.
Thus, any direct product-type Bernstein argument based on this factorization
would require suitable monotonicity properties of the auxiliary factor
$h_\alpha$ or of its powers. The following result shows that even the basic
complete monotonicity property fails.

\begin{proposition}[Non-complete monotonicity of $h_\alpha$]
\label{prop:h_not_cm}
For every $0<\alpha<1$, the function
\[
h_\alpha(s)=\frac{1}{1+(1-\alpha)s^{\alpha-1}},\qquad s>0,
\]
is not completely monotone on $(0,\infty)$.
\end{proposition}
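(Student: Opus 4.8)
The plan is to use the weakest necessary condition for complete monotonicity and show it already fails. Recall that a completely monotone function $f$ on $(0,\infty)$ must satisfy $(-1)^n f^{(n)}(s)\ge 0$ for all $n\ge 0$ and all $s>0$; the case $n=1$ forces $-h_\alpha'(s)\ge 0$, i.e. complete monotonicity implies that $h_\alpha$ is non-increasing. I would therefore reduce the whole statement to the single claim that $h_\alpha$ is strictly \emph{increasing}, which contradicts complete monotonicity already at first order and makes any inspection of higher derivatives unnecessary.

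First I would put $h_\alpha$ into a transparent form. Writing $c=1-\alpha\in(0,1)$ so that $s^{\alpha-1}=s^{-c}$, and clearing the negative power, one has
\[
h_\alpha(s)=\frac{1}{1+c\,s^{-c}}=\frac{s^{c}}{s^{c}+c},\qquad s>0.
\]
From this the qualitative picture is immediate: as $s\to 0^+$ the numerator vanishes and $h_\alpha(s)\to 0$, while as $s\to\infty$ we get $h_\alpha(s)\to 1$. A completely monotone function is non-increasing and cannot climb from the value $0$ at the origin to $1$ at infinity, so this limiting behaviour already signals the obstruction. To make it rigorous I would simply differentiate: setting $g(s)=1+c\,s^{-c}$ gives $g'(s)=-c^{2}s^{-c-1}<0$, and since $h_\alpha=1/g$,
\[
h_\alpha'(s)=-\frac{g'(s)}{g(s)^{2}}=\frac{c^{2}\,s^{-c-1}}{\bigl(1+c\,s^{-c}\bigr)^{2}}>0\qquad(s>0).
\]
Hence $h_\alpha'(s)>0$ everywhere, so $(-1)^{1}h_\alpha'(s)=-h_\alpha'(s)<0$, which violates the $n=1$ inequality in the definition of complete monotonicity. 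This proves the proposition for every $0<\alpha<1$.

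I do not expect a genuine obstacle here; the only thing to resist is the temptation to analyze the signs of the higher derivatives $h_\alpha^{(n)}$, which is algebraically messy and entirely beside the point. The substance of the argument — and its relevance to the paper — is precisely that $h_\alpha$ is \emph{increasing} rather than decreasing, so it is disqualified from complete monotonicity at the crudest possible level. Consequently the candidate factorization $\Phi_{\alpha,\beta}(s)=s^\alpha h_\alpha(s)^\beta$ cannot be fed into the Bernstein product mechanism for any $\beta>0$, since $h_\alpha^\beta$ inherits the same monotonicity direction and hence fails complete monotonicity as well.
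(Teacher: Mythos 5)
Your proof is correct and follows essentially the same route as the paper: both reduce complete monotonicity to the first-order necessary condition that $h_\alpha$ be non-increasing, and both contradict it by computing $h_\alpha'(s)=(1-\alpha)^2 s^{\alpha-2}\bigl(1+(1-\alpha)s^{\alpha-1}\bigr)^{-2}>0$ (your $c^2 s^{-c-1}$ with $c=1-\alpha$ is the same expression). The limiting values $h_\alpha(0^+)=0$ and $h_\alpha(\infty)=1$ are a nice additional sanity check but not needed.
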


\begin{proof}
Recall that if $g$ is completely monotone on $(0,\infty)$, then
$(-1)^n g^{(n)}(s)\ge 0$ for all $n\in\mathbb{N}_0$ and $s>0$.
In particular, taking $n=1$ yields $-g'(s)\ge 0$, hence every completely
monotone function is non-increasing on $(0,\infty)$; see
\cite[Theorem~1.4]{SchillingSongVondracek2012}.

A direct computation gives, for $s>0$,
\[
h_\alpha'(s)
=
\frac{(1-\alpha)^2, s^{\alpha-2}}
{\bigl(1+(1-\alpha)s^{\alpha-1}\bigr)^2}.
\]
Since $0<\alpha<1$, we have $(1-\alpha)^2>0$ and $s^{\alpha-2}>0$ for all
$s>0$, and the denominator is strictly positive. Therefore $h_\alpha'(s)>0$
for every $s>0$, i.e.\ $h_\alpha$ is strictly increasing on $(0,\infty)$.
This contradicts the necessary monotonicity property of completely monotone
functions. Hence $h_\alpha$ cannot be completely monotone on $(0,\infty)$.
\end{proof}

Figure~\ref{fig:halpha} illustrates that $h_\alpha$ is increasing for
representative values of $\alpha$, supporting Proposition~\ref{prop:h_not_cm}.

\begin{figure}[H]
\centering
\includegraphics[width=0.75\textwidth]{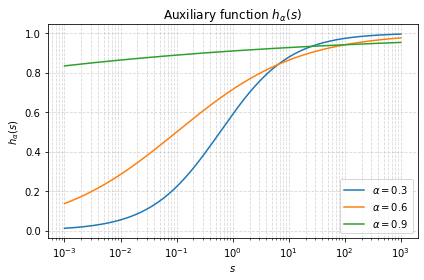}
\caption{Auxiliary function
$h_{\alpha}(s)=1/(1+(1-\alpha)s^{\alpha-1})$ for
$\alpha\in{0.3,0.6,0.9}$.
The monotone increase confirms Proposition~\ref{prop:h_not_cm}.}
\label{fig:halpha}
\end{figure}

Proposition~\ref{prop:h_not_cm} shows that the canonical product route based
on the factorization
\[
\Phi_{\alpha,\beta}(s)=s^\alpha h_\alpha(s)^\beta
\]
cannot be used to establish the Bernstein property of $\Phi_{\alpha,\beta}$
for $\beta>0$.

\begin{corollary}[Failure of the canonical Bernstein product mechanism]
\label{cor:Phi_not_Bernstein_mechanism}
For every $0<\alpha<1$ and $\beta>0$, the representation
\[
\Phi_{\alpha,\beta}(s)=s^\alpha h_\alpha(s)^\beta
\]
does not yield a Bernstein symbol through the standard product-type
construction based on a completely monotone auxiliary factor.
\end{corollary}

\begin{remark}
Corollary~\ref{cor:Phi_not_Bernstein_mechanism} should not be interpreted as a
non-Bernstein result for the symbol itself. It only shows that the most direct
factorization argument fails. The actual Bernstein property of
$\Phi_{\alpha,\beta}$ requires a separate analysis of the derivative
$\Phi'_{\alpha,\beta}$, which is carried out next.
\end{remark}

\begin{remark}[Origin of the complementary Bernstein range]
The obstruction to the Bernstein property for $\beta>1$ is proved in
Proposition~\ref{prop:Phi_not_BF_beta_gt_1}. The complementary positive range
$0\le\beta\le1$ was brought to the author's attention by the public staged
proof APP-0019 of the Pudim AI Project~\cite{PudimAI_APP0019_2026}. The
present manuscript includes a self-contained proof of this positive range in
Theorem~\ref{thm:Phi_BF_beta_le_1}. Together, these results give the sharp
Bernstein threshold
\[
\Phi_{\alpha,\beta}^{W}\in\mathcal{BF}
\quad\Longleftrightarrow\quad
0\le\beta\le1.
\]
This should not be confused with the complete Bernstein property. In
particular, complete Bernstein and subordination issues require additional
arguments and are not recovered from the Bernstein property alone.
\end{remark}

\subsection{Bernstein property in the natural range $0\le\beta\le1$}

\begin{theorem}[Bernstein property for $0\le\beta\le1$]
\label{thm:Phi_BF_beta_le_1}
Let $0<\alpha<1$ and $0\le\beta\le1$. Then
\[
\Phi_{\alpha,\beta}(s)
=
\frac{s^\alpha}{\bigl(1+(1-\alpha)s^{\alpha-1}\bigr)^\beta},
\qquad s>0,
\]
belongs to $\mathcal{BF}$.
\end{theorem}

\begin{proof}
Set
\[
a:=1-\alpha\in(0,1),
\qquad
y=s^a.
\]
Then
\[
\Phi_{\alpha,\beta}(s)
=s^{1-a}\bigl(1+a s^{-a}\bigr)^{-\beta}
=s^{1-a+a\beta}(s^a+a)^{-\beta}.
\]
Let
\[
p:=1-a+a\beta=\alpha+a\beta.
\]
A direct differentiation gives
\begin{align*}
\Phi'_{\alpha,\beta}(s)
&=p s^{p-1}(s^a+a)^{-\beta}
-\beta a s^{p+a-1}(s^a+a)^{-\beta-1} \\
&=s^{p-1}(s^a+a)^{-\beta-1}
\left[p(s^a+a)-\beta a s^a\right] \\
&=s^{p-1}(s^a+a)^{-\beta-1}
\left[\alpha s^a+\alpha a+a^2\beta\right] \\
&=s^{p-1}(s^a+a)^{-\beta}
\left(\alpha+\frac{a^2\beta}{s^a+a}\right).
\end{align*}
Since $p-1=a(\beta-1)$, we obtain
\[
\Phi'_{\alpha,\beta}(s)=H_{\alpha,\beta}(s^a),
\]
where
\[
H_{\alpha,\beta}(y)
=
y^{\beta-1}(y+a)^{-\beta}
\left(\alpha+\frac{a^2\beta}{y+a}\right),
\qquad y>0.
\]

We now prove that $H_{\alpha,\beta}$ is completely monotone for
$0\le\beta\le1$. If $0\le\beta<1$, then
$y^{\beta-1}=y^{-(1-\beta)}$ is completely monotone. Indeed, for
$r=1-\beta>0$,
\[
y^{-r}
=\frac{1}{\Gamma(r)}
\int_0^\infty e^{-yt}t^{r-1},dt.
\]
For $\beta=1$, this factor is simply equal to $1$.

For $\beta>0$, the shifted factor $(y+a)^{-\beta}$ is completely monotone,
since
\[
(y+a)^{-\beta}
=
\frac{1}{\Gamma(\beta)}
\int_0^\infty e^{-yt}e^{-at}t^{\beta-1},dt.
\]
For $\beta=0$, it is the constant $1$. Finally,
\[
\alpha+\frac{a^2\beta}{y+a}
\]
is the sum of a positive constant and a nonnegative multiple of the completely
monotone resolvent $(y+a)^{-1}$. By closure of completely monotone functions
under products and nonnegative sums, $H_{\alpha,\beta}$ is completely monotone.

The function $g(s)=s^a$ is a Bernstein function for $0<a<1$, since
\[
g'(s)=a s^{a-1}=a s^{-(1-a)}
\]
is completely monotone. The standard composition theorem for completely
monotone functions states that if $H$ is completely monotone and $g$ is a
Bernstein function with positive range, then $H\circ g$ is completely
monotone. Hence
\[
\Phi'_{\alpha,\beta}=H_{\alpha,\beta}\circ g
\]
is completely monotone on $(0,\infty)$.

\noindent Since $\Phi_{\alpha,\beta}$ is nonnegative and $C^\infty$ on $(0,\infty)$,
this proves that $\Phi_{\alpha,\beta}\in\mathcal{BF}$.
\end{proof}

\begin{remark}[Endpoint $\beta=1$]
For $\beta=1$, the above proof gives the particularly transparent formula
\[
H_{\alpha,1}(y)
=\frac{\alpha}{y+a}
+
\frac{a^2}{(y+a)^2},
\]
which is visibly completely monotone. The case $\beta=0$ reduces to the
classical Caputo symbol $\Phi_{\alpha,0}(s)=s^\alpha$.
\end{remark}

\subsection{A genuine obstruction for $\beta>1$}

Beyond the failure of the product mechanism, one can show that for
$\beta>1$ the symbol $\Phi_{\alpha,\beta}$ is not Bernstein at all.

\begin{proposition}[Non-Bernstein property for $\beta>1$]
\label{prop:Phi_not_BF_beta_gt_1}
Let $0<\alpha<1$ and $\beta>1$. Then
\[
\Phi_{\alpha,\beta}(s)
=\frac{s^\alpha}{\bigl(1+(1-\alpha)s^{\alpha-1}\bigr)^\beta},
\qquad s>0,
\]
does not belong to $\mathcal{BF}$.
\end{proposition}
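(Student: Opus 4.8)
The plan is to derive a contradiction from a monotonicity consequence of the Bernstein property, using the sharp low-frequency behaviour of $\Phi_{\alpha,\beta}$. The point is that for $\beta>1$ the symbol vanishes at the origin strictly faster than linearly, which is incompatible with the secant-slope monotonicity enjoyed by every Bernstein function.

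First I would sharpen the low-frequency estimate of Proposition~\ref{prop:Phi_2regime} from a two-sided bound into an exact limit. Factoring $(1-\alpha)s^{\alpha-1}$ out of the denominator gives, for $s>0$,
\[
\Phi_{\alpha,\beta}(s)
= (1-\alpha)^{-\beta}\, s^{\gamma}\,\bigl(1+(1-\alpha)^{-1}s^{1-\alpha}\bigr)^{-\beta},
\qquad \gamma := \alpha+\beta(1-\alpha),
\]
whose corrective factor tends to $1$ as $s\to0^+$, so that $\Phi_{\alpha,\beta}(s)\sim (1-\alpha)^{-\beta}s^{\gamma}$. The decisive arithmetic observation is that
\[
\gamma-1 = (\beta-1)(1-\alpha),
\]
which is strictly positive precisely when $\beta>1$; hence $\Phi_{\alpha,\beta}(s)/s\to0$ as $s\to0^+$ in this regime.

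Second, I would invoke the standard fact that if $f$ is a Bernstein function, then $s\mapsto f(s)/s$ is completely monotone on $(0,\infty)$. This follows from the L\'evy--Khintchine representation $f(s)=a+bs+\int_{(0,\infty)}(1-e^{-st})\,\mu(\dd t)$ by writing $(1-e^{-st})/s=\int_0^t e^{-s\tau}\dd\tau$ and noting that $a/s$, the constant $b$, and each such integral are completely monotone, a property preserved under the positive mixture against $\mu$ (see \cite{SchillingSongVondracek2012}). In particular $f(s)/s$ is non-increasing, so its limit as $s\to0^+$ equals $\sup_{s>0}f(s)/s$.

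The contradiction then closes immediately: if $\Phi_{\alpha,\beta}$ were a Bernstein function, the non-increasing map $s\mapsto\Phi_{\alpha,\beta}(s)/s$ would satisfy $\lim_{s\to0^+}\Phi_{\alpha,\beta}(s)/s\ge \Phi_{\alpha,\beta}(1)>0$ (positivity being guaranteed by Lemma~\ref{lem:basic}), contradicting the limit $0$ obtained in the first step. I expect the only delicate point to be the passage from the magnitude estimate of Proposition~\ref{prop:Phi_2regime} to a genuine limit for $\Phi_{\alpha,\beta}(s)/s$, which is precisely what the explicit factorization above supplies. A completely equivalent route, which I would mention as an alternative, is to differentiate this factorization twice and check that $\Phi_{\alpha,\beta}''(s)\sim (1-\alpha)^{-\beta}\gamma(\gamma-1)s^{\gamma-2}>0$ near the origin, contradicting the concavity forced by complete monotonicity of $\Phi_{\alpha,\beta}'$; I prefer the monotonicity argument since it avoids tracking the lower-order derivative terms.
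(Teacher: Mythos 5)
Your proposal is correct. It rests on the same decisive observation as the paper's proof --- the exact low-frequency asymptotics $\Phi_{\alpha,\beta}(s)\sim(1-\alpha)^{-\beta}s^{\gamma}$ with $\gamma=\alpha+\beta(1-\alpha)>1$ when $\beta>1$, obtained from the same factorization of the denominator --- but the Bernstein-side obstruction you invoke is different. The paper argues that Bernstein functions are concave and that the power-law behaviour with exponent $\gamma>1$ forces local convexity near the origin; your argument instead uses complete monotonicity (hence monotone decrease) of $s\mapsto f(s)/s$ for a Bernstein function $f$, so that $\lim_{s\to0^+}\Phi_{\alpha,\beta}(s)/s\ge\Phi_{\alpha,\beta}(1)>0$ would be forced, contradicting the superlinear vanishing. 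Your route buys a small but genuine gain in rigour: the concavity argument, as stated in the paper, implicitly requires differentiating the asymptotic relation twice (justifiable here since the corrective factor $\bigl(1+(1-\alpha)^{-1}s^{1-\alpha}\bigr)^{-\beta}$ is smooth and tends to $1$ with vanishing correction, but not spelled out), whereas your secant-slope version needs only the limit of the function values themselves, which the explicit factorization delivers immediately. The self-contained derivation of the complete monotonicity of $f(s)/s$ from the L\'evy--Khintchine representation is also correct; alternatively one could cite it directly from \cite{SchillingSongVondracek2012}. Both proofs are valid; yours is marginally more robust, the paper's marginally shorter.
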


\begin{proof}
Bernstein functions are concave on $(0,\infty)$. Writing
$a:=1-\alpha\in(0,1)$, we have
\[
\Phi_{\alpha,\beta}(s)
=
s^{\alpha+a\beta}(s^a+a)^{-\beta}.
\]
Letting $s\to0^+$ yields
\[
\Phi_{\alpha,\beta}(s)
\sim
a^{-\beta}s^{p},
\qquad
p=\alpha+(1-\alpha)\beta.
\]
If $\beta>1$, then $p>1$, so $\Phi_{\alpha,\beta}$ is locally convex near
$0$, which contradicts the concavity of Bernstein functions.
\end{proof}

\begin{corollary}[Sharp Bernstein threshold]
\label{cor:sharp_BF_threshold}
For every $0<\alpha<1$ and $\beta\ge0$,
\[
\Phi_{\alpha,\beta}\in\mathcal{BF}
\quad\Longleftrightarrow\quad
0\le\beta\le1.
\]
\end{corollary}

\begin{proof}
The implication
$0\le\beta\le1\Rightarrow\Phi_{\alpha,\beta}\in\mathcal{BF}$
follows from Theorem~\ref{thm:Phi_BF_beta_le_1}. The reverse implication
follows from Proposition~\ref{prop:Phi_not_BF_beta_gt_1}.
\end{proof}

\subsection{Positioning of the $W$-operator}

The preceding results show that the $W$-operator has a more subtle position
with respect to Bernstein theory than suggested by the canonical product
factorization alone. The factorization
\[
\Phi_{\alpha,\beta}(s)=s^\alpha h_\alpha(s)^\beta
\]
is not compatible with the standard product mechanism when $\beta>0$,
because $h_\alpha$ is not completely monotone. Nevertheless, the symbol itself
is Bernstein exactly in the natural range $0\le\beta\le1$.

This distinction is important. The $W$-operator is not obtained from the usual
complete-Bernstein-times-completely-monotone construction, but in the range
$0\le\beta\le1$ its symbol still belongs to the Bernstein class through a
direct derivative argument. For $\beta>1$, the operator leaves the Bernstein
class. At the Volterra level, however, the kernel representation and the
reciprocal-kernel construction remain meaningful for all $\beta\ge0$.

Despite the failure of the canonical product mechanism, the $W$-operator
retains a robust analytical structure: it admits a Volterra convolution kernel
expressed in terms of Prabhakar Mittag--Leffler functions, satisfies a
fundamental theorem of calculus, has the correct classical limit as
$\alpha\to1$, and generates well-posed abstract Cauchy problems via
resolvent-based Laplace inversion in the natural range $0\le\beta\le1$; see
Sections~\ref{sec:volterra-inverse} and~\ref{sec:W-evolution}.

Compared with the Atangana--Baleanu operator \cite{Luchko2019AB}, the
$W$-operator avoids high-frequency saturation and preserves a clear spectral
separation between fractional and first-order regimes. This controlled
frequency behaviour motivates its construction and places it in a distinct
Volterra--Bernstein configuration: Bernstein in the sharp range
$0\le\beta\le1$, but not through the standard product mechanism.

\section{The $W$--operator: Volterra representation and inverse operator}
\label{sec:volterra-inverse}

This section realizes the symbol $\Phi_{\alpha,\beta}$ (Section~\ref{sec:symbol}) as a concrete Volterra time--operator and constructs its natural left-inverse. The construction is entirely Laplace-based and therefore does not require the Bernstein classification proved in Section~\ref{sec:positioning}. In particular, we obtain (i) a Caputo-type normalization in the Laplace domain, (ii) a causal convolution (memory) representation, (iii) explicit Prabhakar kernels for both the derivative and its inverse, and (iv) a fractional fundamental theorem of calculus.

\subsection{Laplace-domain definition and Volterra kernel}

\begin{definition}[$W$--fractional derivative]\label{def:W-derivative}
Let $0<\alpha<1$ and $\beta\ge0$. For sufficiently regular
$u\colon[0,\infty)\to X$, define ${}^{W}D_t^{\alpha,\beta}u$ by
\begin{equation}\label{eq:W-def-Laplace}
\mathcal{L}\bigl[{}^{W}D_t^{\alpha,\beta}u\bigr](s)
=
\Phi_{\alpha,\beta}(s)\widehat{u}(s)
-
\frac{\Phi_{\alpha,\beta}(s)}{s}\,u(0),
\qquad \Re s>0,
\end{equation}
where $\Phi_{\alpha,\beta}$ is given by \eqref{eq:Phi}.
\end{definition}

\medskip
\noindent The subtraction in \eqref{eq:W-def-Laplace} is the usual Caputo-type normalization. In particular, when $\beta=0$ we recover the Caputo derivative of order $\alpha$.

\begin{proposition}[Volterra representation]\label{prop:Volterra-representation}
If $u\in C^1([0,T];X)$, then for $t\in(0,T]$,
\[
{}^{W}D_t^{\alpha,\beta}u(t)
=
\int_0^t u'(s)\,w_{\alpha,\beta}(t-s)\,ds,
\]
where $w_{\alpha,\beta}\in L^1_{\mathrm{loc}}(0,\infty)$ is uniquely determined
by
\begin{equation}\label{eq:w-Laplace}
\mathcal{L}[w_{\alpha,\beta}](s)=\frac{\Phi_{\alpha,\beta}(s)}{s},
\qquad \Re s>0.
\end{equation}
\end{proposition}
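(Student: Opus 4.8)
The plan is to establish the Volterra representation by computing the Laplace transform of the claimed convolution integral and matching it against the defining formula \eqref{eq:W-def-Laplace}, while separately verifying that the kernel $w_{\alpha,\beta}$ defined by \eqref{eq:w-Laplace} genuinely exists as an $L^1_{\mathrm{loc}}$ function. First I would observe that for $u\in C^1([0,T];X)$ the function $u'$ is continuous, so the convolution $(u'\ast w_{\alpha,\beta})(t)=\int_0^t u'(s)\,w_{\alpha,\beta}(t-s)\,ds$ is well defined for $t\in(0,T]$ as soon as $w_{\alpha,\beta}\in L^1_{\mathrm{loc}}(0,\infty)$. Applying the Laplace convolution theorem gives $\Lcal[u'\ast w_{\alpha,\beta}](s)=\Lcal[u'](s)\,\Lcal[w_{\alpha,\beta}](s)$, and using the standard identity $\Lcal[u'](s)=s\widehat{u}(s)-u(0)$ together with \eqref{eq:w-Laplace} yields
\[
\Lcal[u'\ast w_{\alpha,\beta}](s)
=\bigl(s\widehat{u}(s)-u(0)\bigr)\frac{\Phi_{\alpha,\beta}(s)}{s}
=\Phi_{\alpha,\beta}(s)\widehat{u}(s)-\frac{\Phi_{\alpha,\beta}(s)}{s}\,u(0),
\]
which is exactly the right-hand side of \eqref{eq:W-def-Laplace}. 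By the injectivity of the Laplace transform on continuous functions, this identifies $u'\ast w_{\alpha,\beta}$ with ${}^{W}D_t^{\alpha,\beta}u$, proving the representation.

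The nontrivial point, and the part I would treat most carefully, is the existence and integrability of the kernel $w_{\alpha,\beta}$. The candidate transform is $\Phi_{\alpha,\beta}(s)/s$, and I would invoke a Laplace-inversion/completely-monotone-majorant argument to produce a locally integrable function with this transform. Two ingredients are needed. First, near $s=\infty$ the asymptotics of Lemma~\ref{lem:basic} give $\Phi_{\alpha,\beta}(s)/s\sim s^{\alpha-1}$, which is the transform of the integrable kernel $t^{-\alpha}/\Gamma(1-\alpha)$; since $\alpha-1\in(-1,0)$, the associated singularity of $w_{\alpha,\beta}$ at $t=0^+$ is of integrable order $t^{-\alpha}$, so $w_{\alpha,\beta}\in L^1$ near the origin. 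Second, near $s=0^+$ Proposition~\ref{prop:Phi_2regime} gives $\Phi_{\alpha,\beta}(s)/s\simeq s^{\alpha+\beta(1-\alpha)-1}$, an exponent strictly greater than $-1$ for all admissible $\alpha,\beta$, guaranteeing controlled behaviour governing the large-$t$ decay and the absence of a non-integrable tail on compact time intervals. These two regimes, together with the sectorial bounds of Proposition~\ref{prop:Phi_2regime} ensuring analyticity and decay of $\Phi_{\alpha,\beta}(s)/s$ on the sector $\abs{\arg s}\le\pi-\varepsilon$, justify defining $w_{\alpha,\beta}$ by the Bromwich integral and shifting the contour onto a Hankel path, yielding a function in $L^1_{\mathrm{loc}}(0,\infty)$.

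For a more explicit and self-contained route I would expand the symbol in a binomial/geometric series. Writing $h_\alpha(s)^\beta=(1+(1-\alpha)s^{\alpha-1})^{-\beta}$ and using the generalized binomial expansion, one formally obtains
\[
\frac{\Phi_{\alpha,\beta}(s)}{s}
=s^{\alpha-1}\sum_{k\ge0}\binom{-\beta}{k}(1-\alpha)^{k}\,s^{k(\alpha-1)},
\]
so each term $s^{(k+1)(\alpha-1)}$ inverts to a power $t^{(k+1)(\alpha-1)-\cdots}$ type expression and the sum assembles into a three-parameter Prabhakar Mittag--Leffler kernel, matching the explicit Prabhakar form announced in the introduction and later in this section. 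This representation both confirms $w_{\alpha,\beta}\in L^1_{\mathrm{loc}}$ and makes the kernel concrete; I would, however, keep the abstract inversion argument as the primary proof and present the Prabhakar series as the explicit identification, since establishing term-by-term invertibility and series convergence is the main technical obstacle. Finally, uniqueness of $w_{\alpha,\beta}$ follows immediately from the injectivity of the Laplace transform on $L^1_{\mathrm{loc}}$ functions of at most exponential growth.
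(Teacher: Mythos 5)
Your proof is correct and follows essentially the same route as the paper: substitute the Laplace transform identity relating $\widehat{u}$ and $\mathcal{L}[u']$ into \eqref{eq:W-def-Laplace}, identify the result as $\frac{\Phi_{\alpha,\beta}(s)}{s}\,\mathcal{L}[u'](s)$, and conclude via the convolution theorem and injectivity of the Laplace transform. The additional care you take in establishing that $w_{\alpha,\beta}$ exists in $L^1_{\mathrm{loc}}(0,\infty)$ (via sectorial asymptotics and the binomial/Prabhakar expansion) is welcome but is exactly what the paper defers to the immediately following explicit-kernel proposition, so it does not constitute a different approach.
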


\begin{proof}
Use $\widehat u(s)=s^{-1}u(0)+s^{-1}\mathcal{L}[u'](s)$ in
\eqref{eq:W-def-Laplace} to obtain
\[
\mathcal{L}\bigl[{}^{W}D_t^{\alpha,\beta}u\bigr](s)
=
\frac{\Phi_{\alpha,\beta}(s)}{s}\,\mathcal{L}[u'](s).
\]
Define $w_{\alpha,\beta}$ via \eqref{eq:w-Laplace} and apply the convolution theorem.
\end{proof}

\begin{proposition}[Explicit memory kernel]\label{prop:kernel-explicit}
For $0<\alpha<1$ and $\beta\ge0$,
\[
w_{\alpha,\beta}(t)
=
t^{-\alpha}\,
E^{\beta}_{1-\alpha,\,1-\alpha}\bigl(-(1-\alpha)t^{1-\alpha}\bigr),
\qquad t>0.
\]
\end{proposition}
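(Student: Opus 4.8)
The plan is to verify the stated identity at the level of Laplace transforms and then invoke the uniqueness already recorded in Proposition~\ref{prop:Volterra-representation}: since $w_{\alpha,\beta}$ is characterized by \eqref{eq:w-Laplace}, it suffices to show that the proposed function has Laplace transform equal to $\Phi_{\alpha,\beta}(s)/s$. The key analytic input is the classical Prabhakar transform formula. Writing the three-parameter Mittag--Leffler function as $E^{\gamma}_{\rho,\mu}(z)=\sum_{k\ge0}\frac{(\gamma)_{k}}{k!\,\Gamma(\rho k+\mu)}z^{k}$, where $(\gamma)_k$ denotes the Pochhammer symbol, a termwise Laplace transform of $t^{\mu-1}E^{\gamma}_{\rho,\mu}(\lambda t^{\rho})$ together with the binomial series $\sum_{k}\frac{(\gamma)_{k}}{k!}x^{k}=(1-x)^{-\gamma}$ gives
\[
\mathcal{L}\bigl[t^{\mu-1}E^{\gamma}_{\rho,\mu}(\lambda t^{\rho})\bigr](s)=\frac{s^{\rho\gamma-\mu}}{(s^{\rho}-\lambda)^{\gamma}},
\]
valid for $\Re s$ large and then extended by analytic continuation.

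First I would reduce $\Phi_{\alpha,\beta}(s)/s$ to the exact shape of the right-hand side above. Setting $a:=1-\alpha\in(0,1)$, so that $\alpha-1=-a$, the elementary identity $1+a s^{-a}=(s^{a}+a)/s^{a}$ yields
\[
\frac{\Phi_{\alpha,\beta}(s)}{s}=\frac{s^{\alpha-1}}{\bigl(1+(1-\alpha)s^{\alpha-1}\bigr)^{\beta}}=\frac{s^{-a}}{(1+a s^{-a})^{\beta}}=\frac{s^{a(\beta-1)}}{(s^{a}+a)^{\beta}}.
\]
Next I would match parameters. Choosing $\rho=1-\alpha$, $\mu=1-\alpha$, $\gamma=\beta$, and $\lambda=-(1-\alpha)$, the Prabhakar formula produces $s^{\rho\gamma-\mu}/(s^{\rho}-\lambda)^{\gamma}=s^{a\beta-a}/(s^{a}+a)^{\beta}$, which is precisely the expression obtained above. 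The associated time-domain function is $t^{\mu-1}E^{\gamma}_{\rho,\mu}(\lambda t^{\rho})=t^{-\alpha}E^{\beta}_{1-\alpha,1-\alpha}\bigl(-(1-\alpha)t^{1-\alpha}\bigr)$, giving the claimed kernel. It remains to confirm admissibility: near $t=0$ the series gives $E^{\beta}_{1-\alpha,1-\alpha}(0)=1/\Gamma(1-\alpha)$, so $w_{\alpha,\beta}(t)\sim t^{-\alpha}/\Gamma(1-\alpha)$, which is locally integrable since $\alpha<1$; hence $w_{\alpha,\beta}\in L^1_{\mathrm{loc}}(0,\infty)$ as required by Proposition~\ref{prop:Volterra-representation}.

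The main obstacle is the rigorous justification of the termwise transform and its analytic continuation to the half-plane $\Re s>0$. The binomial expansion converges only for $|\lambda|\,|s|^{-\rho}<1$, i.e.\ for $|s|>(1-\alpha)^{1/(1-\alpha)}$, so the identity is first established on a right half-plane far from the origin. To extend it to all $\Re s>0$ I would argue that both sides are holomorphic there: for $\Re s>0$ one has $|\arg s^{a}|<a\pi/2<\pi/2$, so $s^{a}$ lies strictly in the right half-plane and, since $a>0$, the factor $s^{a}+a$ never vanishes, making $(s^{a}+a)^{-\beta}$ single-valued and analytic even for non-integer $\beta$. Two holomorphic functions agreeing on an open subset coincide on the connected domain $\Re s>0$, and the uniqueness of the Laplace transform invoked in Proposition~\ref{prop:Volterra-representation} then identifies $w_{\alpha,\beta}$ with the stated Prabhakar kernel.
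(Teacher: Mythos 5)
Your proposal is correct and follows essentially the same route as the paper: rewrite $\Phi_{\alpha,\beta}(s)/s$ as $s^{\alpha-1}\bigl(1+(1-\alpha)s^{\alpha-1}\bigr)^{-\beta}$ and identify it with the standard Prabhakar Laplace transform, which the paper simply cites. You supply more detail than the paper does (explicit parameter matching, the analytic continuation of the binomial expansion to $\Re s>0$, and the local integrability check), all of which is accurate and strengthens the argument rather than diverging from it.
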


\begin{proof}
From \eqref{eq:w-Laplace} and \eqref{eq:Phi} we have
\[
\frac{\Phi_{\alpha,\beta}(s)}{s}
=
s^{\alpha-1}\bigl(1+(1-\alpha)s^{\alpha-1}\bigr)^{-\beta}.
\]
The inverse Laplace transform is given by the standard Prabhakar transform formula \cite{Prabhakar1971,GarraGorenfloPolito2014}.
\end{proof}

\begin{lemma}[Positivity of the $W$-memory kernel]\label{lem:positivity_w}
Let $0<\alpha<1$ and $0\le \beta \le 1$. Then
\[
w_{\alpha,\beta}(t)>0 \qquad\text{for all } t>0.
\]
\end{lemma}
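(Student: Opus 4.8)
The plan is to prove the stronger property that $w_{\alpha,\beta}$ is \emph{completely monotone} on $(0,\infty)$, since strict positivity is then immediate. By \eqref{eq:w-Laplace} and \eqref{eq:Phi} its Laplace transform is
\[
g(s):=\mathcal{L}[w_{\alpha,\beta}](s)=\frac{\Phi_{\alpha,\beta}(s)}{s}=s^{\alpha-1}\bigl(1+(1-\alpha)s^{\alpha-1}\bigr)^{-\beta},
\]
and $w_{\alpha,\beta}$ is completely monotone if and only if $g$ is a Stieltjes function \cite{SchillingSongVondracek2012}. The whole argument therefore reduces to showing that $g$ is Stieltjes for $0\le\beta\le1$.

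First I would linearize the algebra via the substitution $v=s^{1-\alpha}$. With $a:=1-\alpha\in(0,1)$, a direct computation gives $g(s)=F\bigl(s^{1-\alpha}\bigr)$, where
\[
F(v)=v^{\beta-1}(v+a)^{-\beta},\qquad v>0.
\]
I would then show $F$ is Stieltjes using the complex characterization: $F$ is holomorphic on $\mathbb{C}\setminus(-\infty,0]$, positive on $(0,\infty)$, and maps the upper half-plane into the closed lower half-plane. For $v=re^{i\theta}$ with $\theta\in(0,\pi)$, adding the positive constant $a$ keeps $v+a$ in the upper half-plane with $\arg(v+a)=\psi\in(0,\theta)$, so that $\arg F(v)=(\beta-1)\theta-\beta\psi$. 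Because $0\le\beta\le1$, both summands are nonpositive, giving $\arg F(v)\le0$; and $\psi<\theta<\pi$ yields $\arg F(v)>-\theta>-\pi$. Hence $\arg F(v)\in(-\pi,0]$, i.e.\ $\Im F\le0$ on the upper half-plane, which is exactly the Stieltjes property.

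Next, $s\mapsto s^{1-\alpha}$ is holomorphic on the cut plane, sends $(0,\infty)$ to $(0,\infty)$, and maps the upper half-plane into itself (arguments are merely scaled by $1-\alpha<1$). Composing with the Stieltjes function $F$, one obtains that $g=F(s^{1-\alpha})$ is positive on $(0,\infty)$ and maps the upper half-plane into the lower half-plane, hence $g$ is Stieltjes. From its representation $g(s)=\int_0^\infty(s+\lambda)^{-1}\,\sigma(d\lambda)$ I recover $w_{\alpha,\beta}(t)=\int_0^\infty e^{-\lambda t}\,\sigma(d\lambda)$, which is completely monotone. Since $w_{\alpha,\beta}\not\equiv0$ — indeed $w_{\alpha,\beta}(t)\sim t^{-\alpha}/\Gamma(1-\alpha)\to+\infty$ as $t\to0^+$ — the measure $\sigma$ is nonzero and the integrand strictly positive, so $w_{\alpha,\beta}(t)>0$ for every $t>0$. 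The endpoints are covered: at $\beta=0$ one has $w_{\alpha,0}(t)=t^{-\alpha}/\Gamma(1-\alpha)$, and $\beta=1$ gives $F(v)=(v+a)^{-1}$.

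The hard part is the Stieltjes claim for $F$, i.e.\ the argument estimate $\arg F(v)\in(-\pi,0]$: this is precisely where the constraint $\beta\le1$ enters, forcing $(\beta-1)\theta\le0$, and where the proof would break down for $\beta>1$. As a cross-check, this step can alternatively be settled by invoking known complete-monotonicity criteria for Prabhakar functions, namely that $t\mapsto t^{\nu-1}E^{\gamma}_{\mu,\nu}(-t^{\mu})$ is completely monotone when $0<\mu\le1$ and $0<\mu\gamma\le\nu\le1$ (see, e.g., \cite{GarraGorenfloPolito2014}); our parameters $\mu=\nu=1-\alpha$, $\gamma=\beta$ satisfy these bounds exactly in the range $0<\beta\le1$, with $\beta=0$ trivial.
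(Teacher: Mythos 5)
Your proof is correct, but it takes a genuinely different route from the paper. The paper's proof is a short appeal to the literature: it rewrites $w_{\alpha,\beta}(t)=t^{\rho-1}E^{\beta}_{\rho,\rho}(-c\,t^{\rho})$ with $\rho=c=1-\alpha$, handles $\beta=0$ explicitly, and for $0<\beta\le1$ invokes the known complete-monotonicity criterion for Prabhakar kernels ($0<\mu\gamma\le\nu\le\mu\le1$) from \cite{GarraGorenfloPolito2014}, concluding strict positivity from analyticity. You instead prove complete monotonicity from scratch on the Laplace side: the substitution $v=s^{1-\alpha}$ reduces $\mathcal{L}[w_{\alpha,\beta}](s)$ to $F(v)=v^{\beta-1}(v+a)^{-\beta}$, and the argument estimate $\arg F(v)=(\beta-1)\theta-\beta\psi\in(-\pi,0]$ for $v$ in the upper half-plane (using $0<\psi<\theta$ and $0\le\beta\le1$) shows $F$ is Stieltjes; composing with the complete Bernstein function $s\mapsto s^{1-\alpha}$ preserves the Stieltjes property, and Bernstein's theorem then gives complete monotonicity and, since the representing measure is nonzero, strict positivity. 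Your computation of $F$, the bound $\psi<\theta$, and the resulting argument window are all correct, and you correctly isolate where $\beta\le1$ is used (it is exactly the paper's parameter restriction, and your final remark recovers the paper's criterion as a special case). What your approach buys is a self-contained, checkable proof that does not depend on an external Prabhakar result and that makes the failure mechanism for $\beta>1$ visible at the level of the symbol; what the paper's approach buys is brevity. Two small points worth making explicit if you write this up: the constant term $b$ in the Stieltjes representation $g(s)=a_0/s+b+\int_{(0,\infty)}(s+\lambda)^{-1}\sigma(d\lambda)$ vanishes because $g(s)\sim s^{\alpha-1}\to0$ as $s\to\infty$ (otherwise a point mass at $t=0$ would appear in $w_{\alpha,\beta}$), and the possible mass at $\lambda=0$ should be folded into the measure before asserting $w_{\alpha,\beta}(t)=\int_{[0,\infty)}e^{-\lambda t}\,\sigma(d\lambda)$; neither affects the conclusion.
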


\begin{proof}
Set $\rho:=1-\alpha\in(0,1)$ and $c:=1-\alpha>0$. Then
\[
w_{\alpha,\beta}(t)= t^{\rho-1} E^{\beta}_{\rho,\rho}\!\bigl(-c\,t^{\rho}\bigr).
\]
For $\beta=0$ we use $E^{0}_{\rho,\rho}(z)=1/\Gamma(\rho)$ to get
$w_{\alpha,0}(t)=t^{\rho-1}/\Gamma(\rho)>0$. For $0<\beta\le1$, a complete monotonicity criterion for Prabhakar kernels yields that $t\mapsto t^{\rho-1}E^{\beta}_{\rho,\rho}(-t^\rho)$ is completely
monotone (hence nonnegative) when $0<\rho\le1$ and $0<\rho\beta\le\rho$ \cite{GarraGorenfloPolito2014}. The scaling $t\mapsto c^{1/\rho}t$ preserves the sign, hence $w_{\alpha,\beta}(t)\ge0$. Since it is analytic and not identically zero, the inequality is strict for $t>0$.
\end{proof}

\medskip
\noindent This parameter restriction is consistent with the evolution theory in Section~\ref{sec:W-evolution}, where $0\le\beta\le1$ ensures the availability of robust kernel/resolvent estimates.

\begin{remark}[On the range $\beta>1$]\label{rem:beta_range_w}
Lemma~\ref{lem:positivity_w} covers the parameter range naturally compatible with known positivity/complete monotonicity criteria for Prabhakar-type kernels. For $\beta>1$, these arguments no longer control the sign, and the positivity of $w_{\alpha,\beta}$ would require a separate analysis.
\end{remark}

\subsection{Inverse Volterra kernel and the $W$--fractional integral}

\begin{lemma}[Inverse kernel: existence, formula, and local bounds]
\label{lem:inverse-kernel}
Let $0<\alpha<1$ and $\beta\ge0$. There exists $k_{\alpha,\beta}\in L^1_{\mathrm{loc}}(0,\infty)$ such that
\begin{equation}\label{eq:k-Laplace}
\mathcal{L}[k_{\alpha,\beta}](s)=\frac{1}{\Phi_{\alpha,\beta}(s)},
\qquad \Re s>0.
\end{equation}
Moreover,
\[
k_{\alpha,\beta}(t)
=
t^{\alpha-1}\,
E^{-\beta}_{1-\alpha,\,\alpha}\!\bigl(-(1-\alpha)t^{1-\alpha}\bigr),
\qquad t>0,
\]
and there exists $C>0$ such that
\begin{equation}\label{eq:k-local-bounds}
|k_{\alpha,\beta}(t)|\le C\,t^{\alpha-1}\qquad (0<t\le1),
\qquad
k_{\alpha,\beta}\in L^1_{\mathrm{loc}}(0,\infty).
\end{equation}
\end{lemma}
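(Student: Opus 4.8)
The plan is to invert the reciprocal symbol $1/\Phi_{\alpha,\beta}$ explicitly by matching it to the Prabhakar Laplace transform, and then to read off the regularity and growth needed for the $L^1_{\mathrm{loc}}$ statement and the local bound. First I would compute, from \eqref{eq:Phi}, the reciprocal symbol
\[
\frac{1}{\Phi_{\alpha,\beta}(s)}
= s^{-\alpha}\bigl(1+(1-\alpha)s^{\alpha-1}\bigr)^{\beta},
\qquad \Re s>0.
\]
Setting $\rho:=1-\alpha\in(0,1)$ and using $s^{\alpha-1}=s^{-\rho}$, I would rewrite this as $s^{-\alpha-\rho\beta}\bigl(s^{\rho}+(1-\alpha)\bigr)^{\beta}$, a form in which the standard Prabhakar transform
\[
\mathcal L\bigl[t^{\mu-1}E^{\gamma}_{\rho,\mu}(\omega t^{\rho})\bigr](s)
=\frac{s^{\rho\gamma-\mu}}{(s^{\rho}-\omega)^{\gamma}},
\qquad \Re s>|\omega|^{1/\rho},
\]
applies with the identification $(\rho,\mu,\gamma,\omega)=(1-\alpha,\alpha,-\beta,-(1-\alpha))$; indeed $\rho\gamma-\mu=-\alpha-\rho\beta$ and $(s^{\rho}-\omega)^{\gamma}=(s^{\rho}+(1-\alpha))^{-\beta}$, reproducing exactly the claimed kernel $k_{\alpha,\beta}(t)=t^{\alpha-1}E^{-\beta}_{1-\alpha,\alpha}(-(1-\alpha)t^{1-\alpha})$.

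To make the derivation self-contained, and in particular to legitimize the negative upper index $\gamma=-\beta$, I would instead obtain the formula by expanding the binomial factor. For $|s|>(1-\alpha)^{1/(1-\alpha)}$ the series $(1+(1-\alpha)s^{-\rho})^{\beta}=\sum_{k\ge0}\binom{\beta}{k}(1-\alpha)^{k}s^{-\rho k}$ converges, whence $1/\Phi_{\alpha,\beta}(s)=\sum_{k\ge0}\binom{\beta}{k}(1-\alpha)^{k}s^{-\alpha-\rho k}$. Inverting term by term via $\mathcal L^{-1}[s^{-\nu}](t)=t^{\nu-1}/\Gamma(\nu)$ (legitimate since $\nu=\alpha+\rho k>0$ and the resulting $t$-series converges locally uniformly), and using $\binom{\beta}{k}=(-1)^{k}(-\beta)_{k}/k!$ to fold the signs into $(-(1-\alpha)t^{\rho})^{k}$, I would recognize the sum as $t^{\alpha-1}E^{-\beta}_{1-\alpha,\alpha}(-(1-\alpha)t^{1-\alpha})$. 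Because $\Re\rho>0$, the Prabhakar function $E^{-\beta}_{1-\alpha,\alpha}$ is entire in its argument (of order $1/(1-\alpha)$), so this series defines a continuous function on all of $(0,\infty)$, independently of the restricted region where the binomial expansion converges.

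The regularity claims then follow quickly. The factor $E^{-\beta}_{1-\alpha,\alpha}(-(1-\alpha)t^{1-\alpha})$ is continuous on $(0,\infty)$, so $k_{\alpha,\beta}$ is continuous there and hence integrable on every compact subset of $(0,\infty)$, giving $k_{\alpha,\beta}\in L^1_{\mathrm{loc}}(0,\infty)$. For the local bound \eqref{eq:k-local-bounds}, I note that $t\mapsto-(1-\alpha)t^{1-\alpha}$ maps $(0,1]$ into the bounded interval $[-(1-\alpha),0]$, on which the continuous (indeed entire) function $E^{-\beta}_{1-\alpha,\alpha}$ is bounded by some $C>0$; this yields $|k_{\alpha,\beta}(t)|\le C\,t^{\alpha-1}$ for $0<t\le1$, and since $\alpha-1>-1$ this power is integrable near the origin.

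The step I expect to be the main obstacle is upgrading the Laplace identity from the region $\{|s|>(1-\alpha)^{1/(1-\alpha)}\}$, where the binomial expansion converges, to the full half-plane $\Re s>0$ asserted in \eqref{eq:k-Laplace}. For this I would first control the growth of $k_{\alpha,\beta}$ as $t\to\infty$: since $1/\Phi_{\alpha,\beta}(s)\sim(1-\alpha)^{\beta}s^{-(\alpha+(1-\alpha)\beta)}$ as $s\to0^{+}$, the Prabhakar asymptotics along the negative real axis give the matching algebraic behaviour $k_{\alpha,\beta}(t)\sim \frac{(1-\alpha)^{\beta}}{\Gamma(\alpha+(1-\alpha)\beta)}\,t^{\alpha+(1-\alpha)\beta-1}$, so that $k_{\alpha,\beta}$ is at most polynomially large. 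Consequently its Laplace integral converges absolutely for every $\Re s>0$, and both $\mathcal L[k_{\alpha,\beta}]$ and $1/\Phi_{\alpha,\beta}$ are holomorphic on $\Re s>0$; agreeing on an open subset with a limit point, they coincide throughout $\Re s>0$ by the identity theorem. The delicate point is precisely the algebraic, rather than exponential, growth of the Prabhakar function along the negative axis, which is what guarantees that the analytic continuation closes down to the imaginary axis.
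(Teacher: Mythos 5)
Your proof is correct and follows essentially the same route as the paper: rewrite $1/\Phi_{\alpha,\beta}(s)=s^{-\alpha}(1+(1-\alpha)s^{\alpha-1})^{\beta}$, expand the binomial factor, invert termwise via $\mathcal{L}^{-1}[s^{-\nu}](t)=t^{\nu-1}/\Gamma(\nu)$, recognize the Prabhakar series, and use the behaviour $k_{\alpha,\beta}(t)\sim t^{\alpha-1}/\Gamma(\alpha)$ near $0$ together with Prabhakar asymptotics at infinity for the local bound and local integrability. Your treatment of the extension of \eqref{eq:k-Laplace} from the region of convergence of the binomial series to all of $\Re s>0$ (polynomial growth of the kernel plus the identity theorem) is in fact more explicit than the paper's, which delegates that point to Remark~\ref{rem:beta_noninteger}.
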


\begin{proof}
Starting from
\[
\frac{1}{\Phi_{\alpha,\beta}(s)}
=
s^{-\alpha}\bigl(1+(1-\alpha)s^{\alpha-1}\bigr)^{\beta},
\]
expand $(1+z)^\beta=\sum_{n\ge0}\binom{\beta}{n}z^n$ with
$z=(1-\alpha)s^{\alpha-1}$ and invert termwise using
$\mathcal{L}^{-1}[s^{-\rho}](t)=t^{\rho-1}/\Gamma(\rho)$ for $\rho>0$. This yields the stated Prabhakar representation and \eqref{eq:k-Laplace}. From the series definition of $E^{-\beta}_{1-\alpha,\alpha}$ one has $k_{\alpha,\beta}(t)\sim t^{\alpha-1}/\Gamma(\alpha)$ as $t\to0^+$, giving the bound \eqref{eq:k-local-bounds} and local integrability near $0$. Local integrability on $(1,T)$ follows from standard Prabhakar asymptotics; see, e.g., \cite{KilbasSrivastavaTrujillo2006,Mainardi2010}.
\end{proof}

\begin{remark}[On non-integer $\beta$]\label{rem:beta_noninteger}
For $\beta\notin\mathbb{N}$ the binomial series is understood as an analytic expansion for $|z|<1$; since both sides define analytic functions on $\mathbb{C}\setminus(-\infty,0]$, the identity extends to $\Re s>0$ by analytic continuation, which is sufficient for Laplace inversion.
\end{remark}

\begin{definition}[$W$--fractional integral]\label{def:W-integral}
Let $0<\alpha<1$ and $\beta\ge0$, and let $k_{\alpha,\beta}$ be given by Lemma~\ref{lem:inverse-kernel}. For $f\in L^1_{\mathrm{loc}}(0,\infty)$, define
\[
({}^{W}I_t^{\alpha,\beta}f)(t)
=
\int_0^t k_{\alpha,\beta}(t-s)\,f(s)\,ds,
\qquad t>0,
\]
whenever the convolution is finite.
\end{definition}

\begin{remark}[Classical limits]\label{rem:classical-limits}
If $\beta=0$, then $k_{\alpha,0}(t)=t^{\alpha-1}/\Gamma(\alpha)$ and
${}^{W}I^{\alpha,0}$ coincides with the Riemann--Liouville fractional integral. For $\beta>0$, the Prabhakar factor modifies memory while preserving the Volterra (causal) structure.
\end{remark}

\subsection{Fractional fundamental theorem of calculus}

\begin{theorem}[Fractional fundamental theorem of calculus]\label{thm:FTC}
Let $0<\alpha<1$ and $\beta\ge0$. Assume $f\in L^1_{\mathrm{loc}}(0,\infty)$ and
that $u:={}^{W}I_t^{\alpha,\beta}f$ is locally absolutely continuous with
$u(0)=0$. Then
\[
{}^{W}D_t^{\alpha,\beta}u(t)=f(t)
\quad\text{for almost every }t>0.
\]
\end{theorem}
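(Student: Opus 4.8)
The plan is to work entirely in the Laplace domain, where the two operators reduce to multiplication by $\Phi_{\alpha,\beta}(s)$ and $1/\Phi_{\alpha,\beta}(s)$ respectively, and then transfer the resulting identity back to the time domain. First I would record the Laplace transform of $u = {}^{W}I_t^{\alpha,\beta}f$: since $u$ is a convolution of $k_{\alpha,\beta}$ with $f$, the convolution theorem together with \eqref{eq:k-Laplace} gives $\widehat{u}(s) = \mathcal{L}[k_{\alpha,\beta}](s)\,\widehat{f}(s) = \widehat{f}(s)/\Phi_{\alpha,\beta}(s)$. This is the key algebraic relation that makes ${}^{W}I_t^{\alpha,\beta}$ a right-inverse of the symbol.

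Next I would feed this into the defining formula \eqref{eq:W-def-Laplace} for the $W$-derivative. Because $u(0)=0$ by hypothesis, the Caputo-type correction term $\Phi_{\alpha,\beta}(s)\,u(0)/s$ vanishes, so
\[
\mathcal{L}\bigl[{}^{W}D_t^{\alpha,\beta}u\bigr](s)
= \Phi_{\alpha,\beta}(s)\,\widehat{u}(s)
= \Phi_{\alpha,\beta}(s)\cdot\frac{\widehat{f}(s)}{\Phi_{\alpha,\beta}(s)}
= \widehat{f}(s),
\qquad \Re s>0.
\]
By the uniqueness theorem for the Laplace transform on $L^1_{\mathrm{loc}}$ functions of exponential type, this forces ${}^{W}D_t^{\alpha,\beta}u = f$ almost everywhere.

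The main obstacle is justifying that the formal manipulations are legitimate for the precise regularity class assumed, rather than just formally. Two points need care. First, the definition \eqref{eq:W-def-Laplace} is stated for ``sufficiently regular'' $u$; I must confirm that the hypothesis that $u$ is locally absolutely continuous with $u(0)=0$ places $u$ in the class where \eqref{eq:W-def-Laplace} applies, so that ${}^{W}D_t^{\alpha,\beta}u$ is well defined. Here the Volterra representation of Proposition~\ref{prop:Volterra-representation} is the right tool: absolute continuity gives $u'\in L^1_{\mathrm{loc}}$ with $\widehat{u}(s) = s^{-1}\widehat{u'}(s)$ (using $u(0)=0$), so that ${}^{W}D_t^{\alpha,\beta}u = u' * w_{\alpha,\beta}$ is a genuine convolution, finite a.e.\ by local integrability of $w_{\alpha,\beta}$ (Proposition~\ref{prop:kernel-explicit}). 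Second, I must ensure all Laplace transforms converge on a common right half-plane; since $f\in L^1_{\mathrm{loc}}$ and $k_{\alpha,\beta}\in L^1_{\mathrm{loc}}$ with the local bound \eqref{eq:k-local-bounds}, the convolution $u$ and its transform are controlled, and one may restrict to $f$ of at most exponential growth (or localize via truncation) to guarantee absolute convergence of $\widehat f$, $\widehat u$, and $\widehat{u'}$ on $\Re s > \omega$ for some $\omega$.

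To close the argument cleanly I would therefore structure the proof as: (i) establish $\widehat{u} = \widehat{f}/\Phi_{\alpha,\beta}$ via the convolution theorem and Lemma~\ref{lem:inverse-kernel}; (ii) use $u(0)=0$ in \eqref{eq:W-def-Laplace} to cancel the correction term and obtain $\mathcal{L}[{}^{W}D_t^{\alpha,\beta}u] = \widehat{f}$; (iii) invoke injectivity of the Laplace transform to conclude ${}^{W}D_t^{\alpha,\beta}u = f$ a.e. The genuinely delicate step is (i)--(ii) at the level of rigor, specifically verifying that the Caputo normalization in \eqref{eq:W-def-Laplace} is compatible with the Volterra convolution form under mere local absolute continuity; once that compatibility is secured, the symbol cancellation $\Phi_{\alpha,\beta}\cdot\Phi_{\alpha,\beta}^{-1}=1$ and Laplace uniqueness make the conclusion immediate.
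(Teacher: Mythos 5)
Your proposal matches the paper's proof essentially step for step: compute $\widehat{u}=\widehat{f}/\Phi_{\alpha,\beta}$ from the convolution structure and \eqref{eq:k-Laplace}, cancel the Caputo correction term using $u(0)=0$ in \eqref{eq:W-def-Laplace}, and conclude by injectivity of the Laplace transform. The extra care you devote to verifying that $u$ lies in the admissible regularity class via the Volterra representation is a welcome refinement, but the argument is the same.
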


\begin{proof}
From $u=k_{\alpha,\beta}*f$ and \eqref{eq:k-Laplace},
\[
\widehat{u}(s)=\frac{1}{\Phi_{\alpha,\beta}(s)}\,\widehat{f}(s),
\qquad \Re s>0.
\]
Applying \eqref{eq:W-def-Laplace} and using $u(0)=0$ gives
\[
\mathcal{L}\bigl[{}^{W}D_t^{\alpha,\beta}u\bigr](s)
=
\Phi_{\alpha,\beta}(s)\widehat{u}(s)
=
\widehat{f}(s).
\]
Injectivity of the Laplace transform yields ${}^{W}D_t^{\alpha,\beta}u=f$ a.e.
\end{proof}

\begin{remark}[Independence of Bernstein theory]\label{rem:no_Bernstein_needed}
The Volterra representation, the inverse operator, and
Theorem~\ref{thm:FTC} follow from reciprocal Laplace symbols and convolution alone; no complete monotonicity or subordination hypothesis is required. This remains true independently of the sharp Bernstein threshold established in Section~\ref{sec:positioning}.
\end{remark}

\section{Abstract $W$-fractional evolution problems and well-posedness}
\label{sec:W-evolution}

The Volterra representation and Laplace-symbol analysis developed in the previous sections naturally lead to the study of abstract evolution equations driven by the $W$-operator. In this section, we establish a well-posedness theory for the $W$-fractional Cauchy problem in Banach spaces, based on Laplace-domain resolvent representations and sectorial functional calculus.

\medskip
\noindent We consider the abstract problem
\begin{equation}\label{eq:W-ACP}
\begin{cases}
{}^{W}D_{t}^{\alpha,\beta} u(t) + A u(t) = f(t), & t>0,\\[0.3em]
u(0)=u_0\in X,
\end{cases}
\end{equation}
where $0<\alpha<1$, $0\le\beta\le1$, $X$ is a complex Banach space, and $A$ is a (possibly unbounded) linear operator on $X$.

\subsection{Laplace-domain formulation and sectorial framework}

\noindent Throughout this section we assume that $A$ is sectorial of angle $\varphi<\pi/2$, that is,
\[
  \sigma(A)\subset\overline{\Sigma_\varphi},
  \qquad
  \|(zI-A)^{-1}\|\le \frac{C_\psi}{|z|}
  \quad\text{for all }z\notin \Sigma_\psi,
\]
for every $\psi\in(\varphi,\pi)$. Equivalently, $-A$ generates a bounded analytic $C_0$-semigroup on $X$.

\noindent Taking Laplace transforms in \eqref{eq:W-ACP} yields the algebraic identity
\begin{equation}\label{eq:uhat-W}
  \widehat{u}(s)
  = (\Phi_{\alpha,\beta}(s)I+A)^{-1}
    \Bigl(
      \widehat{f}(s)
      + \Phi_{\alpha,\beta}(s)s^{-1}u_0
    \Bigr),
  \qquad \Re s>0.
\end{equation}
This representation motivates the definition of the associated resolvent family via inverse Laplace transform.

\subsection{Well-posedness and $W$-resolvent family}

\begin{theorem}[Well-posedness of the $W$-fractional evolution problem] \label{thm:W-wellposed}
Let $0<\alpha<1$, $0\le\beta\le1$, and let $A$ be sectorial of angle
$<\pi/2$ on $X$. Fix $\theta\in(\varphi,\pi)$ and let $\Gamma_\theta$ denote a standard contour in $\mathbb{C}\setminus\overline{\Sigma_\theta}$.

\begin{enumerate}[label=\textnormal{(\roman*)}]
\item \textbf{Resolvent estimate.}
There exists $C_\theta>0$ such that
\begin{equation}\label{eq:resolvent-bound-Phi}
\|(\Phi_{\alpha,\beta}(s)I + A)^{-1}\|
\le \frac{C_\theta}{|\Phi_{\alpha,\beta}(s)|},
\qquad s\in\Gamma_\theta .
\end{equation}
In particular, using Proposition~\ref{prop:Phi_2regime},
\[
\|(\Phi_{\alpha,\beta}(s)I + A)^{-1}\|
\lesssim
\begin{cases}
|s|^{-\alpha}, & |s|\ge1,\\[0.2em]
|s|^{-(\alpha+\beta(1-\alpha))}, & 0<|s|\le1,
\end{cases}
\qquad s\in\Gamma_\theta .
\]

\item \textbf{Definition of the $W$-resolvent family.}
For $t>0$, define
\begin{equation}\label{eq:W-family}
W_{\alpha,\beta}(t)
:= \frac{1}{2\pi i}
   \int_{\Gamma_\theta}
   e^{st}(\Phi_{\alpha,\beta}(s)I + A)^{-1}\,ds .
\end{equation}
Then $W_{\alpha,\beta}(t)\in\mathcal{L}(X)$ and the integral converges absolutely.

\item \textbf{Mild solution.}
For every $u_0\in X$ and $f\in L^1_{\rm loc}([0,T];X)$, the function
\begin{equation}\label{eq:mild-solution}
u(t)= W_{\alpha,\beta}(t)u_0
      +\int_0^t W_{\alpha,\beta}(t-\tau)\,f(\tau)\,d\tau
\end{equation}
belongs to $C((0,T];X)$, satisfies $u(t)\to u_0$ as $t\downarrow0$, and solves \eqref{eq:W-ACP} in the sense of Laplace transforms.

\item \textbf{Uniqueness.}
The mild solution is unique in $C((0,T];X)\cap L^1_{\rm loc}((0,T];X)$.
\end{enumerate}
\end{theorem}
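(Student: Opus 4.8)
The plan is to treat the four assertions through the Laplace/resolvent machinery, the linchpin being a \emph{sectoriality transfer} from $A$ to the symbol $\Phi_{\alpha,\beta}$. For (i), I would first record the elementary but decisive estimate
\[
|\arg \Phi_{\alpha,\beta}(s)| \le \bigl(\alpha+\beta(1-\alpha)\bigr)\,|\arg s|, \qquad |\arg s|<\pi,
\]
which I would prove by writing $\arg\Phi_{\alpha,\beta}(s)=\alpha\arg s-\beta\arg\bigl(1+(1-\alpha)s^{\alpha-1}\bigr)$ and observing that for $w=(1-\alpha)s^{\alpha-1}$ one has $|\arg(1+w)|\le|\arg w|=(1-\alpha)|\arg s|$ (for $\arg w\ge0$ the sector $\{\,0\le\arg z\le\arg w\,\}$ is a convex cone containing both $1$ and $w$, hence their sum; the case $\arg w<0$ is symmetric). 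Since $0\le\beta\le1$ forces $\alpha+\beta(1-\alpha)\le1$, the image $\Phi_{\alpha,\beta}(s)$ stays in a sector of half-angle $<\pi-\varphi$ provided the contour half-angle $\theta$ satisfies $(\alpha+\beta(1-\alpha))\theta<\pi-\varphi$; because $\varphi<\pi/2$ this is compatible with the decay requirement $\theta>\pi/2$ (indeed $(\pi-\varphi)/(\alpha+\beta(1-\alpha))\ge\pi-\varphi>\pi/2$). Fixing such a $\theta$ and a $\psi\in(\varphi,\pi-(\alpha+\beta(1-\alpha))\theta]$, the point $-\Phi_{\alpha,\beta}(s)$ lies outside $\Sigma_\psi$ for every $s\in\Gamma_\theta$, so the sectorial bound for $A$ applies to $((-\Phi_{\alpha,\beta}(s))I-A)^{-1}=-(\Phi_{\alpha,\beta}(s)I+A)^{-1}$ and yields exactly \eqref{eq:resolvent-bound-Phi}; the two-regime corollary is then immediate from Proposition~\ref{prop:Phi_2regime}.

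For (ii), I would split $\Gamma_\theta$ into the small arc $\{|s|=\delta\}$ and the two rays $\{|s|\ge\delta,\ \arg s=\pm\theta\}$, and bound the integrand by $|e^{st}|\,C_\theta/|\Phi_{\alpha,\beta}(s)|$. On the rays $\Re s=-|s||\cos\theta|<0$, so using Proposition~\ref{prop:Phi_2regime} the tail $\int_1^\infty r^{-\alpha}e^{-rt|\cos\theta|}\,dr$ converges for every $t>0$, the middle part $\int_\delta^1 r^{-(\alpha+\beta(1-\alpha))}\,dr$ converges because $\alpha+\beta(1-\alpha)\le1$, and the arc contributes a bounded amount; this gives absolute convergence and $W_{\alpha,\beta}(t)\in\mathcal{L}(X)$, with continuity (indeed analyticity) in $t>0$ obtained by differentiating under the integral sign.

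For (iii), the key identity is $\widehat{W_{\alpha,\beta}}(s)=(\Phi_{\alpha,\beta}(s)I+A)^{-1}$, which I would establish by Fubini together with a residue/contour-closing computation of $\tfrac{1}{2\pi i}\int_{\Gamma_\theta}(s-\lambda)^{-1}(\Phi_{\alpha,\beta}(\lambda)I+A)^{-1}\,d\lambda$ for $s$ to the right of $\Gamma_\theta$. This immediately yields that the Duhamel term $\int_0^t W_{\alpha,\beta}(t-\tau)f(\tau)\,d\tau$ has transform $(\Phi_{\alpha,\beta}(s)I+A)^{-1}\widehat f(s)$, matching the $f$-contribution in \eqref{eq:uhat-W}. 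For the initial datum I would propagate $u_0$ by the companion operator obtained from the same contour integral with the extra factor $s^{-1}\Phi_{\alpha,\beta}(s)$, whose transform is $s^{-1}\Phi_{\alpha,\beta}(s)(\Phi_{\alpha,\beta}(s)I+A)^{-1}$; this is precisely the $u_0$-contribution in \eqref{eq:uhat-W}, and since $\Phi_{\alpha,\beta}(s)(\Phi_{\alpha,\beta}(s)I+A)^{-1}=(I+\Phi_{\alpha,\beta}(s)^{-1}A)^{-1}\to I$ strongly as $s\to\infty$, the initial-value theorem gives $u(t)\to u_0$ as $t\downarrow0$. Continuity on $(0,T]$ follows from the $\mathcal{L}(X)$-continuity of the families and standard properties of convolution with $f\in L^1_{\mathrm{loc}}$, and assembling the two contributions reproduces \eqref{eq:uhat-W}, i.e.\ the Laplace-sense solvability.

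Finally, (iv) follows from injectivity of the Laplace transform: the difference $v$ of two mild solutions solves the homogeneous problem, so $(\Phi_{\alpha,\beta}(s)I+A)\widehat v(s)=0$ on a right half-plane; since $-\Phi_{\alpha,\beta}(s)\in\rho(A)$ the operator is injective, whence $\widehat v\equiv0$ and $v\equiv0$ a.e.\ (hence everywhere by continuity on $(0,T]$) by uniqueness of the vector-valued Laplace transform. I expect the main obstacle to be part (i): the entire scheme hinges on the sectoriality transfer, and the nontrivial point is to verify that the symbol's angular dilation factor $\alpha+\beta(1-\alpha)$ stays $\le1$ (this is exactly where $0\le\beta\le1$ enters) so that a single contour angle $\theta$ can simultaneously ensure decay of $e^{st}$ ($\theta>\pi/2$) and keep $-\Phi_{\alpha,\beta}(\Gamma_\theta)$ clear of the spectral sector of $A$ ($\theta<(\pi-\varphi)/(\alpha+\beta(1-\alpha))$). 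Once this geometric compatibility is secured, the remaining estimates are routine.
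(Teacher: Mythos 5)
Your proposal is correct and follows the same overall route as the paper (sectorial resolvent bound on a contour, absolutely convergent Laplace inversion integral, identification of transforms, injectivity for uniqueness), but it supplies two substantive ingredients that the paper's proof does not. First, for part (i) the paper simply asserts that $\lambda=\Phi_{\alpha,\beta}(s)$ ``stays in a region avoiding $-\sigma(A)$'' for $s\in\Gamma_\theta$; your angular dilation lemma $|\arg\Phi_{\alpha,\beta}(s)|\le(\alpha+\beta(1-\alpha))|\arg s|$, proved via the convex-cone observation $|\arg(1+w)|\le|\arg w|$, is exactly the missing justification, and it pinpoints why the hypothesis $0\le\beta\le1$ is needed (the dilation factor stays $\le1$, so a single contour angle can be both $>\pi/2$ for decay and small enough that $-\Phi_{\alpha,\beta}(\Gamma_\theta)$ clears $\Sigma_\varphi$). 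This is a genuine improvement over the paper's argument. Second, in part (iii) you correctly observe that the $u_0$-term must be propagated by the companion operator with symbol $s^{-1}\Phi_{\alpha,\beta}(s)(\Phi_{\alpha,\beta}(s)I+A)^{-1}$ in order to match \eqref{eq:uhat-W} and to obtain $u(t)\to u_0$ via the initial-value theorem. The paper instead writes the mild solution with the bare $W_{\alpha,\beta}(t)u_0$, whose Laplace transform is $(\Phi_{\alpha,\beta}(s)I+A)^{-1}$; this does \emph{not} reproduce the $u_0$-contribution $\Phi_{\alpha,\beta}(s)s^{-1}(\Phi_{\alpha,\beta}(s)I+A)^{-1}u_0$ in \eqref{eq:uhat-W}, and since $\|W_{\alpha,\beta}(\tau)\|\lesssim\tau^{\alpha-1}$ (a bound the paper itself uses later), the claim $W_{\alpha,\beta}(t)u_0\to u_0$ cannot hold for that operator. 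Your version repairs this inconsistency; be explicit that you are proving a corrected form of \eqref{eq:mild-solution} rather than the formula as printed. The remaining parts (ii) and (iv) coincide with the paper's argument, with your treatment of the Fubini/contour-closing identification of $\widehat{W_{\alpha,\beta}}$ being more detailed than the paper's appeal to standard references.
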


\begin{proof}
We work throughout under the standing assumption $0\le\beta\le1$, which is the natural parameter range for the evolution theory developed in this section.

\textit{(i)} Since $A$ is sectorial of angle $<\pi/2$, we may choose
$\theta\in(\varphi,\pi)$ so that the sectorial resolvent estimate holds outside $\Sigma_\theta$. For $s\in\Gamma_\theta$, the complex number $\lambda=\Phi_{\alpha,\beta}(s)$ stays in a region avoiding $-\sigma(A)$ (by the sectorial geometry and the fact that $\Gamma_\theta$ avoids the negative real axis). Hence
\[
\|(\Phi_{\alpha,\beta}(s)I + A)^{-1}\|
= \|(\lambda I + A)^{-1}\|
\le \frac{C_\theta}{|\lambda|}
= \frac{C_\theta}{|\Phi_{\alpha,\beta}(s)|}.
\]
The two-regime bounds then follow directly from Proposition~\ref{prop:Phi_2regime}.

\textit{(ii)} Absolute convergence of \eqref{eq:W-family} follows from the decay of $e^{st}$ along $\Gamma_\theta$ together with \eqref{eq:resolvent-bound-Phi} and the two-regime estimate.

\textit{(iii)} Taking Laplace transforms in \eqref{eq:mild-solution} and using Tonelliâ€™s theorem yields \eqref{eq:uhat-W}. In particular, the contour-defined family \eqref{eq:W-family} is well defined and its Laplace transform coincides with the resolvent $(\Phi_{\alpha,\beta}(s)I+A)^{-1}$ for $\Re s>0$, so that
\eqref{eq:mild-solution} is equivalent to \eqref{eq:W-ACP} in the sense of Laplace transforms. Continuity on $(0,T]$ follows from dominated convergence applied to the contour integral defining $W_{\alpha,\beta}(t)$, while the limit $u(t)\to u_0$ as
$t\downarrow0$ is obtained from the standard Laplace inversion/resolvent-family principle for sectorial generators; see, e.g., \cite[Ch.~3]{Pruss2016} or \cite[Ch.~2]{Haase2006}.

\textit{(iv)} Uniqueness follows by subtraction of two mild solutions and Laplace transform injectivity.
\end{proof}

\begin{definition}[$W$-resolvent family]
The family $\{W_{\alpha,\beta}(t)\}_{t>0}$ defined by \eqref{eq:W-family} is called the \emph{$W$-resolvent family} associated with the operator ${}^{W}D_{t}^{\alpha,\beta}$ and the generator $A$.
\end{definition}

\begin{proposition}[Temporal continuity and local H\"older regularity]
\label{prop:temporal-regularity}
Let $A$ be a sectorial operator of angle $<\pi/2$ on a complex Banach space $X$. Let $0<\alpha<1$ and $0\le\beta\le1$, and let $u_0\in X$ and $f\in L^1_{\mathrm{loc}}([0,T];X)$. Let $u$ be the mild solution of \eqref{eq:W-ACP} given by
\[
u(t)=W_{\alpha,\beta}(t)u_0
+\int_0^t W_{\alpha,\beta}(t-s)\,f(s)\,ds,
\qquad t\in(0,T].
\]
Then $u\in C((0,T];X)$ and
\[
\lim_{t\downarrow0} u(t)=u_0 \quad \text{in } X.
\]

Moreover, if $f\in C([0,T];X)$, then for every $0<t_0<T$ the map
$t\mapsto u(t)$ is H\"older continuous on $[t_0,T]$: for any $\eta\in(0,1)$ there exists $C_{\eta,t_0,T}>0$ such that
\[
\|u(t+h)-u(t)\|
\le C_{\eta,t_0,T}\,|h|^\eta,
\qquad t,t+h\in[t_0,T].
\]
\end{proposition}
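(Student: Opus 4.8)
The plan is to reduce everything to two pointwise bounds on the $W$-resolvent family and its time derivative, and then to treat the homogeneous and inhomogeneous parts of the mild solution separately. The first assertions, $u\in C((0,T];X)$ and $u(t)\to u_0$ as $t\downarrow0$, are already contained in Theorem~\ref{thm:W-wellposed}(iii), so I would only recall them and concentrate on the H\"older estimate on $[t_0,T]$.

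First I would differentiate the contour representation \eqref{eq:W-family} under the integral sign, obtaining $W'_{\alpha,\beta}(t)=\frac{1}{2\pi i}\int_{\Gamma_\theta}s\,e^{st}(\Phi_{\alpha,\beta}(s)I+A)^{-1}\,ds$, the differentiation being justified by the same absolute convergence used in Theorem~\ref{thm:W-wellposed}(ii). Inserting the resolvent bound \eqref{eq:resolvent-bound-Phi} together with the two-regime estimates of Proposition~\ref{prop:Phi_2regime}, splitting $\Gamma_\theta$ into its high-frequency rays ($|s|\ge1$, where $|\Phi_{\alpha,\beta}(s)|\simeq|s|^\alpha$) and its low-frequency part ($|s|\le1$, where $|\Phi_{\alpha,\beta}(s)|\simeq|s|^{\alpha+\beta(1-\alpha)}$), and exploiting the exponential decay $|e^{st}|=e^{-c_\theta|s|t}$ along the rays via a Gamma-function substitution, I would obtain
\[
\|W_{\alpha,\beta}(t)\|\le C\,(t^{\alpha-1}+1),\qquad
\|W'_{\alpha,\beta}(t)\|\le C\,(t^{\alpha-2}+1),\qquad t\in(0,T].
\]
The low-frequency part contributes only bounded terms because $0\le\beta\le1$ forces $(1-\alpha)(1-\beta)\ge0$, so the small-$|s|$ exponents stay under control; this is exactly where the restriction $\beta\le1$ enters.

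With these in hand I would write $u=v+w$ with $v(t)=W_{\alpha,\beta}(t)u_0$ and $w(t)=\int_0^tW_{\alpha,\beta}(t-s)f(s)\,ds$. For the homogeneous part, on $[t_0,T]$ the bound $\|W'_{\alpha,\beta}(\sigma)\|\le C\,t_0^{\alpha-2}$ gives $v(t+h)-v(t)=\int_t^{t+h}W'_{\alpha,\beta}(\sigma)u_0\,d\sigma$ and hence $\|v(t+h)-v(t)\|\le C_{t_0}\|u_0\|\,|h|$, i.e.\ Lipschitz regularity. For the inhomogeneous part I would split, after the substitutions $\tau=t-s$ and $\sigma=t+h-s$,
\[
w(t+h)-w(t)=\int_0^t\bigl[W_{\alpha,\beta}(\tau+h)-W_{\alpha,\beta}(\tau)\bigr]f(t-\tau)\,d\tau+\int_0^h W_{\alpha,\beta}(\sigma)\,f(t+h-\sigma)\,d\sigma,
\]
bound the second integral by $\|f\|_\infty\int_0^h\|W_{\alpha,\beta}\|\lesssim\|f\|_\infty\,h^\alpha$, and bound the first by $\|f\|_\infty\int_0^t\int_\tau^{\tau+h}\|W'_{\alpha,\beta}(\sigma)\|\,d\sigma\,d\tau$, which after a Fubini interchange and $\|W'_{\alpha,\beta}(\sigma)\|\lesssim\sigma^{\alpha-2}$ again reduces to $\lesssim\|f\|_\infty\,h^\alpha$. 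Combining, $\|u(t+h)-u(t)\|\lesssim\|u_0\|\,|h|+\|f\|_\infty\,|h|^\alpha$, and since $|h|\le T$ with $\alpha\le1$ this is $\lesssim|h|^\alpha$; the elementary comparison $|h|^\alpha\le T^{\alpha-\eta}|h|^\eta$ on the compact interval then yields the stated $\eta$-H\"older bound for every $\eta\le\alpha$.

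The main obstacle is the inhomogeneous convolution term. Because the family carries the nonintegrable-derivative singularity $\|W_{\alpha,\beta}(\tau)\|\simeq\tau^{\alpha-1}$ at the origin — inherited through Proposition~\ref{prop:Phi_2regime} from the Caputo high-frequency scaling and independent of $\beta$ — the difference $w(t+h)-w(t)$ cannot in general be made smaller than $O(h^\alpha)$ for merely continuous $f$. Thus the honest H\"older exponent delivered by this argument is $\alpha$ (the homogeneous part being smoother, indeed Lipschitz, away from the origin), and reaching exponents $\eta>\alpha$ across the full range $(0,1)$ is the delicate endpoint: it would require additional regularity of $f$ (for instance $f$ H\"older) or a finer interpolation of the singular convolution kernel, which I would isolate as the technical heart of the proof.
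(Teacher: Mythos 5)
Your argument is correct for exponents $\eta\le\alpha$ and takes a genuinely different route from the paper. The paper does not differentiate the resolvent family; instead it estimates the increment directly on the contour via the elementary inequality $|e^{z}-1|\le C_\eta|z|^\eta$, which gives
\[
\|W_{\alpha,\beta}(t+h)-W_{\alpha,\beta}(t)\|\lesssim |h|^\eta\int_{\Gamma_\theta}e^{-c|s|t_0}|s|^{\eta-\alpha}\,|ds|
\]
for the homogeneous part, and then treats the convolution term with the same splitting $J_1+J_2$ that you use and the bound $\|W_{\alpha,\beta}(\tau)\|\lesssim\tau^{\alpha-1}$ for $J_2$. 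Your derivative bound $\|W_{\alpha,\beta}'(\sigma)\|\lesssim\sigma^{\alpha-2}$ plus the Fubini interchange is an equally standard and, for $J_1$, arguably cleaner mechanism: the paper's claim that ``the same estimate applies to $J_1$ by boundedness of $f$'' silently replaces $t_0$ by $t-s$ in the displayed contour integral, and that integral behaves like $(t-s)^{\alpha-\eta-1}$ as $s\uparrow t$, which is integrable only for $\eta<\alpha$ --- exactly the restriction your computation makes explicit. Your observation that $\beta\le1$ is what keeps the low-frequency part of the contour under control also matches where the paper uses that hypothesis (though at the endpoint $\beta=1$ the small-$|s|$ bound $|s|^{-(\alpha+\beta(1-\alpha))}=|s|^{-1}$ is only borderline integrable, a point both you and the paper leave implicit).

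On the range of $\eta$: you are right that the convolution term caps the honest exponent at $\alpha$ for merely continuous $f$, and this is not a gap in your proof so much as an overclaim in the statement. The paper's own Part~4 concludes with $\|J_2\|\lesssim|h|^\alpha\le|h|^\eta$ ``since $\eta<1$'', but for $|h|<1$ the inequality $|h|^\alpha\le|h|^\eta$ requires $\eta\le\alpha$, not $\eta<1$; so the published argument also only delivers $\eta\le\alpha$, and no additional idea is hiding there. Your diagnosis --- that exponents $\eta>\alpha$ would require extra regularity of $f$ or cancellation between $J_1$ and $J_2$ (which does occur for constant $f$ but degrades with the modulus of continuity of $f$) --- is the correct reading of the situation; the proposition should either restrict to $\eta\in(0,\alpha]$ or strengthen the hypothesis on $f$.
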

\begin{proof}
\textbf{Part 1: Strong continuity of the $W$-resolvent family.}
Let $x\in X$ and $t>0$. Using the contour representation
\[
W_{\alpha,\beta}(t)x
=\frac{1}{2\pi i}\int_{\Gamma_\theta}
e^{st}(\Phi_{\alpha,\beta}(s)I+A)^{-1}x\,ds,
\]
let $t_n\to t$ with $t_n\ge t/2$ for $n$ large. For each fixed $s\in\Gamma_\theta$, we have $e^{st_n}\to e^{st}$. Moreover, since $\Re(s)\le -c|s|$ on $\Gamma_\theta$ and
\[
\|(\Phi_{\alpha,\beta}(s)I+A)^{-1}\|
\lesssim \frac{1}{|\Phi_{\alpha,\beta}(s)|},
\]
the two-regime estimates of Proposition~\ref{prop:Phi_2regime} imply
\[
\|e^{st_n}(\Phi_{\alpha,\beta}(s)I+A)^{-1}x\|
\lesssim e^{-c|s|t/2}\,|s|^{-\alpha}\,\|x\|,
\]
which is integrable along $\Gamma_\theta$. By dominated convergence,
$W_{\alpha,\beta}(t_n)x\to W_{\alpha,\beta}(t)x$. Thus $t\mapsto W_{\alpha,\beta}(t)x$ is continuous on $(0,\infty)$.

\medskip
\textbf{Part 2: Continuity of $u$ on $(0,T]$.}
Let $t_n\to t\in(0,T]$. The convergence $W_{\alpha,\beta}(t_n)u_0\to W_{\alpha,\beta}(t)u_0$
follows from part~1.
For the convolution term, write
\[
\int_0^{t_n}W_{\alpha,\beta}(t_n-s)f(s)\,ds
-\int_0^t W_{\alpha,\beta}(t-s)f(s)\,ds
= I_1(n)+I_2(n),
\]
where
\[
I_1(n)=\int_0^t\bigl(W_{\alpha,\beta}(t_n-s)-W_{\alpha,\beta}(t-s)\bigr)f(s)\,ds,
\quad
I_2(n)=\int_t^{t_n}W_{\alpha,\beta}(t_n-s)f(s)\,ds.
\]
By part~1 and the bound $\|W_{\alpha,\beta}(\tau)\|\lesssim\tau^{\alpha-1}$, $I_1(n)\to0$ by dominated convergence. Moreover,
\[
\|I_2(n)\|
\lesssim \int_0^{|t_n-t|}\tau^{\alpha-1}\,d\tau
\longrightarrow0,
\]
hence $u\in C((0,T];X)$.

\medskip
\textbf{Part 3: Limit as $t\downarrow0$.}
By Theorem~\ref{thm:W-wellposed}, $W_{\alpha,\beta}(t)u_0\to u_0$ strongly as $t\downarrow0$. Since $W_{\alpha,\beta}\in L^1_{\mathrm{loc}}(0,\infty)$,
\[
\left\|\int_0^t W_{\alpha,\beta}(t-s)f(s)\,ds\right\|
\lesssim \int_0^t (t-s)^{\alpha-1}\|f(s)\|\,ds
\longrightarrow0,
\]
which proves $\lim_{t\downarrow0}u(t)=u_0$.

\medskip
\textbf{Part 4: Local H\"older regularity when $f\in C([0,T];X)$.}
Fix $0<t_0<T$ and $t,t+h\in[t_0,T]$. Write
\[
u(t+h)-u(t)
=\bigl(W_{\alpha,\beta}(t+h)-W_{\alpha,\beta}(t)\bigr)u_0
+J_1+J_2,
\]
with
\[
J_1=\int_0^t\bigl(W_{\alpha,\beta}(t+h-s)-W_{\alpha,\beta}(t-s)\bigr)f(s)\,ds,
\quad
J_2=\int_t^{t+h}W_{\alpha,\beta}(t+h-s)f(s)\,ds.
\]
Using the contour representation and the inequality
$|e^{z}-1|\le C_\eta |z|^\eta$ for $\eta\in(0,1)$, we obtain
\[
\|W_{\alpha,\beta}(t+h)-W_{\alpha,\beta}(t)\|
\lesssim |h|^\eta
\int_{\Gamma_\theta} e^{-c|s|t_0}|s|^{\eta-\alpha}\,|ds|
\le C|h|^\eta.
\]
The same estimate applies to $J_1$ by boundedness of $f$. For $J_2$ we use $\|W_{\alpha,\beta}(\tau)\|\lesssim\tau^{\alpha-1}$ to get
\[
\|J_2\|\lesssim \int_0^{|h|}\tau^{\alpha-1}\,d\tau
\lesssim |h|^\alpha
\le |h|^\eta,
\]
since $\eta<1$. Combining the estimates yields the H\"older bound.
\end{proof}

\subsection{Regularity, smoothing, and parameter stability}

Assume $0\le\beta\le1$ and let $A$ be sectorial (analytic) as above.
If $A$ is analytic, the $W$-resolvent family exhibits fractional smoothing of the standard form: for $\gamma\in[0,1]$ one expects bounds
\[
\|A^\gamma W_{\alpha,\beta}(t)\|
\lesssim t^{-\alpha\gamma},
\qquad t>0,
\]
obtained by combining \eqref{eq:resolvent-bound-Phi} with functional calculus estimates for sectorial operators and classical contour arguments.

\noindent Moreover, for every $x\in X$ and $0<t_0<T<\infty$, the map
$(\alpha,\beta,t)\mapsto W_{\alpha,\beta}(t)x$ is continuous on compact subsets of $\{0<\alpha<1,\ 0\le\beta\le1\}\times[t_0,T]$ by dominated convergence in the contour representation \eqref{eq:W-family}.

\noindent Finally, the $W$-evolution problem is stable under classical limits within this parameter regime:
\begin{itemize}
\item as $\alpha\to1$, one recovers the classical analytic semigroup solution of $u'(t)+Au(t)=f(t)$;
\item as $\beta\to0$, one recovers the Caputo fractional solution of order $\alpha$.
\end{itemize}
The convergence holds pointwise in time and uniformly on every interval $[t_0,T]$ with $t_0>0$.

\section{Memory kernels: qualitative comparison}
\label{sec:kernels-comparison}

This section provides a qualitative and illustrative comparison of memory kernels, aimed at highlighting structural differences rather than delivering a full numerical analysis.

\noindent A key motivation for introducing ${}^{W}D_{t}^{\alpha,\beta}$ is to retain an explicit Volterra structure while allowing a controlled modification of the memory profile through the parameter $\beta$.

\subsection{Caputo kernel}

The Caputo derivative of order $\alpha\in(0,1)$ can be written as
\[
    {}^{\mathrm{C}}D_t^\alpha u(t)
    = \int_0^t u'(s)\,k_{\mathrm{C},\alpha}(t-s)\,ds,
\]
with kernel
\[
    k_{\mathrm{C},\alpha}(t)
    = \frac{1}{\Gamma(1-\alpha)}\,t^{-\alpha},
    \qquad t>0.
\]
This kernel is strongly singular at $t=0$, reflecting a pronounced short-time memory effect; see \cite{Podlubny1999,Mainardi2010}.

\subsection{AB kernel (non-singular but structurally non-Volterra)}

In the AB case, the kernel is of Mittag--Leffler type and behaves like a bounded function near $t=0$, which removes the singularity but breaks the classical Volterra--Bernstein structure: the symbol is not Bernstein and the kernel is not completely monotone \cite{Luchko2019AB,Giusti2018AB}. Graphically, one observes a
smooth onset of memory with exponential-type decay and saturation effects at high frequencies.

\subsection{$W$-kernel}

For the $W$-operator, the Volterra kernel is given explicitly by
Proposition~\ref{prop:kernel-explicit}:
\[
    w_{\alpha,\beta}(t)
    =
    t^{-\alpha}\,
    E^{\beta}_{1-\alpha,\,1-\alpha}\!\bigl(-(1-\alpha)\,t^{1-\alpha}\bigr),
    \qquad t>0.
\]
For $\beta=0$, this reduces to the Caputo kernel (up to normalization). For $\beta>0$, the Prabhakar factor modifies the memory profile while preserving a fully explicit Laplace/Volterra structure compatible with sectorial operator theory. In particular, the two-regime behaviour of $\Phi_{\alpha,\beta}$ from Proposition~\ref{prop:Phi_2regime} translates into a tunable long-time regime without imposing high frequency saturation.

\medskip
\noindent Figure~\ref{fig:kernels-comparison} provides a qualitative comparison of the Caputo, AB, and $W$-memory kernels for a fixed $\alpha$ and representative values of $\beta$.

\begin{figure}[H]
    \centering
    \includegraphics[width=1.0\textwidth]{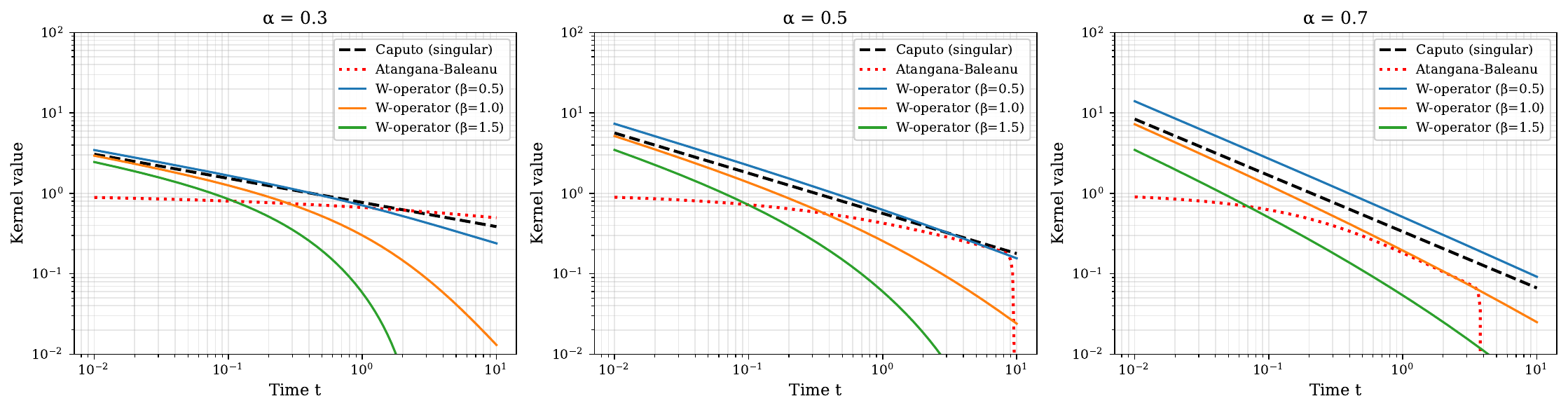}
    \caption{Qualitative comparison of the kernels
    $k_{\mathrm{C},\alpha}(t)$ (Caputo), $k_{\mathrm{AB},\alpha}(t)$
    (Atangana--Baleanu), and $w_{\alpha,\beta}(t)$ ($W$-operator) for a fixed
    $\alpha\in(0,1)$ and representative values of $\beta$.
    The Caputo kernel is singular at $t=0$, while AB kernels are bounded at the
    origin but typically exhibit saturation effects. The $W$-kernel preserves a
    transparent Volterra/Laplace structure and allows controlled modulation of
    the memory profile through $\beta$.}
    \label{fig:kernels-comparison}
\end{figure}

\noindent A more detailed numerical study of $w_{\alpha,\beta}(t)$, including its monotonicity and convexity properties and its dependence on $(\alpha,\beta)$, would provide additional insight into the memory profile encoded by ${}^{W}D_{t}^{\alpha,\beta}$ and help position it with respect to Caputo and AB models.
\section{A model application: $W$-fractional diffusion}
\label{sec:model-application}

To illustrate the scope and applicability of the $W$-operator
${}^{W}D_{t}^{\alpha,\beta}$, we briefly consider a model diffusion
problem driven by this fractional time-derivative.
Let $\Omega\subset\mathbb{R}^d$ be a bounded domain with smooth boundary, and consider
\begin{equation}\label{eq:W-diffusion}
    {}^{W}D_{t}^{\alpha,\beta} u(x,t)
    = \Delta u(x,t) + f(x,t),
    \qquad x\in\Omega,\ t>0,
\end{equation}
subject to homogeneous Dirichlet boundary conditions
\[
    u(x,t) = 0, \qquad x\in\partial\Omega,\ t>0,
\]
and initial condition
\[
    u(x,0) = u_0(x), \qquad x\in\Omega.
\]

\noindent This problem serves as a canonical example of a linear parabolic equation with memory, allowing a direct application of the abstract theory developed in Section~\ref{sec:W-evolution}.

\subsection{Relation with classical fractional diffusion}

For $\beta=0$, the symbol reduces to $\Phi_{\alpha,0}(s)=s^\alpha$ and \eqref{eq:W-diffusion} coincides with the classical time-fractional diffusion equation in the Caputo sense,
\[
    {}^{\mathrm{C}}D_t^\alpha u(x,t)
    = \Delta u(x,t) + f(x,t),
\]
for which a well-established theory is available, including
well-posedness, subordination formulas, smoothing estimates, and
numerical schemes based on convolution quadrature; see, for instance,
\cite{Pruss2016,SakamotoYamamoto2011,LiZhang2019}.

For $\beta>0$, the symbol $\Phi_{\alpha,\beta}$ preserves the Caputo-type high-frequency behavior $\Phi_{\alpha,\beta}(s)\sim s^\alpha$ while modifying the low-frequency regime. In the natural range $0\le\beta\le1$, Theorem~\ref{thm:Phi_BF_beta_le_1} shows that this symbol is Bernstein, although not by the canonical product mechanism. Mild solutions of \eqref{eq:W-diffusion} are constructed directly through Laplace inversion and resolvent estimates, working with the transformed resolvent
\[
    (\Phi_{\alpha,\beta}(s)I-\Delta)^{-1}
\]
along suitable sectorial contours; see \cite{Haase2006,Pruss2016}. This direct Volterra-resolvent construction is independent of any stronger complete-Bernstein or subordination interpretation.

\noindent As a direct consequence of the abstract well-posedness result Theorem~\ref{thm:W-wellposed} in Section~\ref{sec:W-evolution}, we obtain the following result.

\begin{corollary}[Well-posedness of $W$-fractional diffusion]
\label{cor:W-diffusion}
Let $0<\alpha<1$ and $0\le\beta\le1$. Let $X=L^2(\Omega)$ and let
$A=-\Delta$ with homogeneous Dirichlet boundary conditions on $\Omega$. Then $-A$ generates a bounded analytic $C_0$-semigroup on $L^2(\Omega)$, and the problem \eqref{eq:W-diffusion} admits a unique mild solution $u\colon(0,\infty)\to L^2(\Omega)$ for each $u_0\in L^2(\Omega)$ and $f\in L^1_{\mathrm{loc}}(0,\infty;L^2(\Omega))$.
This solution is given by the Laplace-resolvent representation associated with the $W$-resolvent family and depends continuously on the data $(u_0,f)$.
\end{corollary}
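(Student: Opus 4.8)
The plan is to recognize the statement as a direct specialization of the abstract well-posedness Theorem~\ref{thm:W-wellposed} to the concrete generator $A=-\Delta$; hence the only genuine work is to verify that the Dirichlet Laplacian satisfies the sectoriality hypothesis of that theorem, after which existence, uniqueness, and the resolvent representation follow immediately, and continuous dependence is extracted from the representation. First I would record that $A=-\Delta$ with homogeneous Dirichlet boundary conditions on a bounded smooth domain $\Omega$ is a nonnegative self-adjoint operator on $X=L^2(\Omega)$, with domain $H^2(\Omega)\cap H^1_0(\Omega)$, compact resolvent, and discrete spectrum $\{\lambda_n\}_{n\ge1}\subset[\lambda_1,\infty)$ where $\lambda_1>0$ is the first Dirichlet eigenvalue. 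By the spectral theorem, $\|(zI-A)^{-1}\|=\operatorname{dist}(z,\sigma(A))^{-1}$, and for $z$ outside any sector $\Sigma_\psi$ around the positive real axis one has $\operatorname{dist}(z,\sigma(A))\gtrsim|z|$, yielding the sectorial resolvent bound with angle $0<\pi/2$. In particular $A$ is sectorial of angle $<\pi/2$ and $-A=\Delta$ generates the bounded analytic heat semigroup, which establishes the first assertion of the corollary.

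With sectoriality in hand, I would apply Theorem~\ref{thm:W-wellposed} verbatim with this $A$. For its hypothesis to engage I need $\lambda=\Phi_{\alpha,\beta}(s)$ to avoid $-\sigma(A)\subset(-\infty,0)$ along the contour $\Gamma_\theta$; this holds because $\Phi_{\alpha,\beta}(s)\sim s^\alpha$ with $0<\alpha<1$ forces $|\arg\Phi_{\alpha,\beta}(s)|\le\alpha(\pi-\varepsilon)<\pi$, so $\Phi_{\alpha,\beta}(s)$ stays off the negative real axis (Proposition~\ref{prop:Phi_2regime}). Part~(i) then gives the resolvent bound $\|(\Phi_{\alpha,\beta}(s)I-\Delta)^{-1}\|\le C_\theta/|\Phi_{\alpha,\beta}(s)|$, and parts~(ii)--(iv) furnish the $W$-resolvent family $W_{\alpha,\beta}(t)$ of \eqref{eq:W-family}, the mild solution \eqref{eq:mild-solution}, and its uniqueness. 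This proves existence and uniqueness of $u\colon(0,\infty)\to L^2(\Omega)$ together with the Laplace-resolvent representation for every $u_0\in L^2(\Omega)$ and $f\in L^1_{\mathrm{loc}}(0,\infty;L^2(\Omega))$, solving \eqref{eq:W-diffusion}.

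For continuous dependence on the data I would use linearity of the solution map together with the local bound $\|W_{\alpha,\beta}(\tau)\|_{\mathcal{L}(X)}\lesssim\tau^{\alpha-1}$ recorded in the proof of Proposition~\ref{prop:temporal-regularity}. This gives, for each $T>0$ and $t\in(0,T]$,
\[
\|u(t)\|\le\|W_{\alpha,\beta}(t)\|\,\|u_0\|+\int_0^t\|W_{\alpha,\beta}(t-\tau)\|\,\|f(\tau)\|\,\dd\tau\lesssim t^{\alpha-1}\|u_0\|+\int_0^t(t-\tau)^{\alpha-1}\|f(\tau)\|\,\dd\tau,
\]
so that $u$ depends continuously on $(u_0,f)$, with $u_0$ measured in $L^2(\Omega)$ and $f$ in $L^1_{\mathrm{loc}}(0,\infty;L^2(\Omega))$, the estimate being uniform on every interval $[t_0,T]$ with $t_0>0$.

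The main obstacle is essentially cosmetic, since all the analytical difficulty is already absorbed into Theorem~\ref{thm:W-wellposed}: the only point requiring care is to state precisely the topology in which continuous dependence is claimed. Because the $W$-resolvent family is merely $L^1_{\mathrm{loc}}$ in time, with an integrable singularity $\tau^{\alpha-1}$ at the origin rather than a uniform operator bound, I would phrase the stability estimate on intervals $[t_0,T]$ with $t_0>0$, where $\sup_{t\in[t_0,T]}\|W_{\alpha,\beta}(t)\|<\infty$, rather than claiming a bound uniform up to $t=0$.
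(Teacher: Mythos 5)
Your proposal is correct and follows exactly the same route as the paper: verify that the Dirichlet Laplacian is sectorial of angle $<\pi/2$ on $L^2(\Omega)$ (the paper simply cites this as classical, while you supply the self-adjointness/spectral-theorem justification) and then invoke Theorem~\ref{thm:W-wellposed}. Your added remarks on the topology for continuous dependence and the $\tau^{\alpha-1}$ singularity of the resolvent family are sensible elaborations of what the paper leaves implicit, but they do not change the argument.
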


\begin{proof}
It is classical that $-A=-\Delta$ with homogeneous Dirichlet boundary
conditions generates a bounded analytic $C_0$-semigroup on $L^2(\Omega)$ and that $A$ is sectorial of angle $<\pi/2$; see, e.g., \cite{Pruss2016}. The conclusion then follows directly from Theorem~\ref{thm:W-wellposed}.
\end{proof}

\subsection{Heuristic comparison of the regimes ${\beta=0}$ and ${\beta=1}$}

The two distinguished cases
\[
    \beta=0
    \quad\text{and}\quad
    \beta=1
\]
illustrate the qualitative role of the parameter $\beta$.
For $\beta=0$, the dynamics corresponds to classical fractional
subdiffusion of order $\alpha$. For $\beta=1$, the symbol
\[
    \Phi_{\alpha,1}(s)
    = \frac{s}{s^{1-\alpha}+(1-\alpha)}
\]
suggests a competition between fractional and first-order behavior, with a crossover between low- and high-frequency regimes.

\noindent In this case, $\beta=1$ lies at the endpoint of the Bernstein range (Theorem~\ref{thm:Phi_BF_beta_le_1}). The standard Bernstein product mechanism still fails (Proposition~\ref{prop:h_not_cm} and Corollary~\ref{cor:Phi_not_Bernstein_mechanism}), yet the evolution problem remains well posed. From a qualitative viewpoint, one expects:
\begin{itemize}
    \item diffusion slower than the classical heat equation due to the presence of memory, but potentially faster than in the pure Caputo case ($\beta=0$);
    \item a regularized memory effect, without artificial saturation,
    while preserving a Volterra convolution structure;
    \item intermediate decay rates for the energy $\|u(\cdot,t)\|_{L^2(\Omega)}^2$ between the Caputo subdiffusive regime and the standard parabolic case.
\end{itemize}
The $W$-fractional diffusion model therefore provides a flexible framework for exploring memory effects that interpolate between fractional and classical diffusion while remaining compatible with resolvent-based operator theory.

\begin{remark}[Spectral interpretation and modal behavior]
\label{rem:spectral-behavior}
Let $(\lambda_k,\varphi_k)_{k\ge1}$ denote the eigenpairs of the Dirichlet Laplacian $-\Delta$ on $\Omega$, with $0<\lambda_1\le\lambda_2\le\cdots$ and $\{\varphi_k\}$ forming an orthonormal basis of $L^2(\Omega)$. Formally expanding the solution of \eqref{eq:W-diffusion} as
\[
u(x,t)=\sum_{k\ge1} u_k(t)\,\varphi_k(x),
\]
each modal coefficient $u_k$ satisfies the scalar evolution equation
\[
{}^{W}D_t^{\alpha,\beta} u_k(t) + \lambda_k u_k(t) = f_k(t),
\qquad
u_k(0)=\langle u_0,\varphi_k\rangle.
\]

In the Laplace domain this yields
\[
\widehat{u_k}(s)
=
\frac{\Phi_{\alpha,\beta}(s)}{s\bigl(\Phi_{\alpha,\beta}(s)+\lambda_k\bigr)}
\,u_k(0)
+
\frac{1}{\Phi_{\alpha,\beta}(s)+\lambda_k}\,\widehat{f_k}(s),
\]
which makes explicit the role of the symbol $\Phi_{\alpha,\beta}$ as a frequency-dependent damping mechanism.

\noindent For large eigenvalues $\lambda_k$ (high spatial frequencies), the
dominant balance is governed by the high-frequency behavior
$\Phi_{\alpha,\beta}(s)\sim s^\alpha$, and the decay of each mode is
therefore comparable to that of the classical Caputo fractional diffusion. By contrast, for low eigenvalues $\lambda_k$ (large-scale modes), the modified low-frequency regime of $\Phi_{\alpha,\beta}$ becomes relevant. In particular, for $\beta=1$ the linearization of $\Phi_{\alpha,1}$ near $s=0$ induces a transition toward a first-order temporal behavior for the lowest modes.

\noindent As a consequence, the $W$-fractional diffusion model exhibits a scale-dependent relaxation mechanism: high-frequency modes retain fractional subdiffusive decay, while low-frequency modes may experience a faster, more classical-type relaxation. This spectral perspective provides an intuitive explanation for the intermediate energy decay rates discussed above and motivates the use of the parameter $\beta$ as a tunable interpolation between purely fractional and classical diffusion regimes.
\end{remark}

\medskip

\noindent This modal interpretation suggests that the parameter $\beta$ may serve as a physically meaningful tuning parameter for balancing long-range memory and classical relaxation, a feature that deserves further quantitative study.

\subsection{Perspective for numerical experiments}

A concrete numerical investigation of \eqref{eq:W-diffusion} could be
conducted along the following lines:
\begin{itemize}
    \item discretize space using a finite element or finite difference method, leading to a discrete Laplacian matrix $A_h$;
    \item approximate the operator ${}^{W}D_{t}^{\alpha,\beta}$ by a
    convolution quadrature scheme based on a rational approximation of the symbol $\Phi_{\alpha,\beta}$, in the spirit of
    \cite{Lubich1986,LiZhang2019};
    \item compare, for the same spatial mesh and time grid, the solutions corresponding to $\beta=0$ (Caputo), $\beta=1$ ($W$-operator), and the classical heat equation ($\alpha=1$);
    \item analyze the decay of the discrete energy
    $\|u_h(t)\|_{L^2(\Omega)}^2$ and the short-time behavior of the
    numerical solution.
\end{itemize}

\noindent Such experiments would provide a first illustration of the influence of the parameter $\beta$ on diffusion and memory effects, and would also serve as a practical validation of the numerical admissibility of the $W$-operator in a representative parabolic PDE setting.

\subsection{A numerical validation in 1D}

To validate the theoretical findings and illustrate the dissipative nature of the $W$-operator, we solve the diffusion equation~\eqref{eq:W-diffusion} on the domain $\Omega = (0,1)$ with homogeneous Dirichlet boundary conditions. The initial condition is set to the first eigenmode $u_0(x) = \sin(\pi x)$.

\noindent The numerical solution is computed using a spectral decomposition combined with a high-precision inverse Laplace transform method (fixed Talbot contour with $N=24$ quadrature points) to ensure stability and avoid numerical artifacts associated with singular kernels.

\medskip
\noindent Figure~\ref{fig:sensitivity} presents the time evolution of the normalized energy $E(t)/E(0)$ for three fractional orders $\alpha \in \{0.3, 0.5, 0.9\}$ and varying modulation parameters $\beta \in \{0.0, 0.3, 0.5, 0.7, 1.0\}$. The quantitative decay properties are summarized in Table~\ref{tab:sensitivity}.

\begin{figure}[H]
    \centering
    \includegraphics[width=\textwidth]{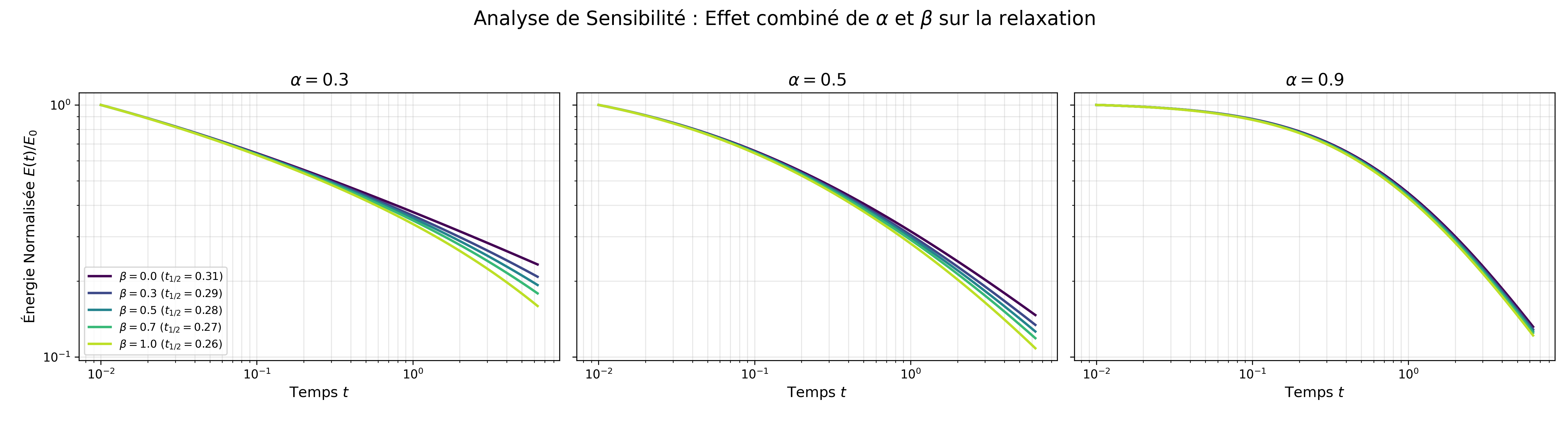}
    \caption{Sensitivity analysis: Combined effect of $\alpha$ and $\beta$ on the energy relaxation.}
    \label{fig:sensitivity}
\end{figure}

\noindent The results confirm that the energy decays monotonically for all parameter combinations, validating the well-posedness of the model. We observe two key trends:
\begin{enumerate}
    \item \textbf{Acceleration effect:} For a fixed $\alpha$, increasing $\beta$ systematically accelerates the decay (steeper slope and shorter half-life $t_{1/2}$).
    \item \textbf{Alpha-dependence:} The influence of $\beta$ is most pronounced at lower fractional orders (e.g., $\alpha=0.3$), where it significantly alters the decay slope. As $\alpha \to 1$, the diffusion process is dominated by the near-integer derivative, and $\beta$ acts primarily as a fine-tuning parameter for the transient regime.
\end{enumerate}

\begin{table}[H]
\centering
\begin{tabular}{|c|c|c|c|}
\hline
$\alpha$ & $\beta$ & Decay Slope & Half-life ($t_{1/2}$) \\
\hline
\hline
0.3 & 0.0 & -0.257 & 0.308 \\
0.3 & 0.3 & -0.292 & 0.292 \\
0.3 & 0.5 & -0.317 & 0.282 \\
0.3 & 0.7 & -0.341 & 0.273 \\
0.3 & 1.0 & -0.378 & 0.261 \\
\hline
0.5 & 0.0 & -0.403 & 0.265 \\
0.5 & 0.3 & -0.430 & 0.254 \\
0.5 & 0.5 & -0.448 & 0.247 \\
0.5 & 0.7 & -0.466 & 0.240 \\
0.5 & 1.0 & -0.494 & 0.231 \\
\hline
0.9 & 0.0 & -0.608 & 0.793 \\
0.9 & 0.3 & -0.614 & 0.775 \\
0.9 & 0.5 & -0.618 & 0.763 \\
0.9 & 0.7 & -0.622 & 0.751 \\
0.9 & 1.0 & -0.628 & 0.733 \\
\hline
\end{tabular}
\caption{Sensitivity analysis of decay properties for varying $\alpha$ and $\beta$.}
\label{tab:sensitivity}
\end{table}

\section{Conclusion}

We have introduced and analyzed a new two-parameter fractional time operator ${}^{W}D_{t}^{\alpha,\beta}$ with Volterra structure, designed to preserve the Caputo-type short-time behavior while allowing a controlled modification of the long-time memory through the parameter $\beta$. The operator is defined via an explicit Laplace symbol and admits a concrete realization as a causal convolution with Prabhakar-type kernels.

\noindent A central contribution of this work is the clarification of the structural position of the $W$-operator with respect to the classical Bernstein/subordination framework. We have shown that the natural factorization of the symbol fails to generate a Bernstein structure through the canonical product mechanism for $\beta>0$. However, this failure of the mechanism does not imply failure of the Bernstein property itself. By analyzing $\Phi'_{\alpha,\beta}$ directly, we proved the sharp threshold
\[
\Phi_{\alpha,\beta}\in\mathcal{BF}
\quad\Longleftrightarrow\quad
0\le\beta\le1.
\]
For $\beta>1$, the symbol is not Bernstein, as follows from the low-frequency asymptotic convexity obstruction. Thus, the natural evolution range $0\le\beta\le1$ is simultaneously the Bernstein range, the positive-kernel range, and the robust well-posedness range considered in this paper.

\noindent Independently of the Bernstein classification, the $W$-operator retains a robust analytical structure: it admits an explicit inverse Volterra operator, satisfies a fractional fundamental theorem of calculus, and fits naturally into a resolvent-based approach to abstract evolution equations. At the purely symbolic/Volterra level (Laplace reciprocity and convolution), the kernel representations and the left-inverse construction remain meaningful for all $\beta\ge0$.

\noindent We established well-posedness of abstract $W$-fractional Cauchy problems with sectorial generators in the range $0\le\beta\le1$, based on Laplace-domain resolvent estimates and contour inversion, leading to the construction of a $W$-resolvent family and a mild solution formula.

\noindent In addition, we proved temporal continuity on $(0,T]$ and local H\"older regularity away from $t=0$ under mild assumptions on the forcing term, and we discussed the expected fractional smoothing estimates for analytic generators. Finally, we recovered the classical limits $\alpha\to1$ and $\beta\to0$ on every interval $[t_0,T]$ with $t_0>0$.

\noindent The application to a $W$-fractional diffusion equation illustrates how the parameter $\beta$ induces a scale-dependent relaxation of spectral modes, interpolating between purely fractional and more classical diffusion regimes.

\noindent Several directions for future research naturally emerge from this work. These include a deeper spectral analysis of the $W$-resolvent family, rigorous parameter-continuity results in $(\alpha,\beta)$, possible complete-Bernstein refinements, extensions to nonlinear problems driven by ${}^{W}D_{t}^{\alpha,\beta}$, and systematic numerical investigations based on convolution quadrature and related time discretization schemes.

\section*{Acknowledgements}
The author thanks the Pudim AI Project for communicating the public staged
proof APP-0019 concerning the Bernstein range of the $W$-symbol. This
communication helped clarify the distinction between the Bernstein property,
the complete Bernstein property, and the subordination framework, and
contributed to improving the corrected version of the manuscript.

\end{document}